\documentclass[a4paper]{article}

\usepackage{amsmath, amssymb, amsthm}
\usepackage[latin1]{inputenc}
\usepackage[english]{babel}
\usepackage[T1]{fontenc}

\newtheorem{thm}{Theorem}

\theoremstyle{remark}
\newtheorem{rem}[thm]{Remark}
\theoremstyle{definition}

\def\t{\theta}
\def\vt{\vert\theta\vert}
\def\b{\beta}
\def\e{\varepsilon}
\newcommand{\R}{\mathbb{R}}
\newcommand{\N}{\mathbb{N}}
\newcommand{\ud}{\mathrm{d}}

\newcommand{\be}{\begin{equation}}
\newcommand{\ee}{\end{equation}}
\newcommand{\fer}[1]{\eqref{#1}}
\newcommand{\bee}{\begin{equation*}}
\newcommand{\eee}{\end{equation*}}
\newcommand{\coloneq}{\mathrel{\mathop :}=}
\title{The grazing collision limit of Kac \\ caricature of Bose-Einstein particles}
\author{Thibaut Allemand\footnote{DMA, \'Ecole Normale Sup\'erieure, 45 rue d'Ulm, 75230 Paris Cedex 05, France}
 \and Giuseppe Toscani\footnote{Dipartimento di matematica, Universit\`a di Pavia, Via Ferrata 1, 27100 Pavia, Italy}}
\date{\today}
\begin{document}

\maketitle

\begin{abstract}
We  discuss the grazing collision limit of certain kinetic models of
Bose-Einstein particles obtained from a suitable modification of the
one-dimensional Kac caricature of a Maxwellian gas without cut-off.
We recover in the limit a nonlinear Fokker-Planck equation which
presents many similarities with the one introduced by Kaniadakis and
Quarati in \cite{KQ94}. In order to do so, we perform a study of the
moments of the solution. Moreover, as is typical
in Maxwell models, we make an essential use of the Fourier version
of the equation.

\end{abstract}

\tableofcontents

\section{Introduction}

The quantum dynamics of many body systems is often modelled by a nonlinear
Boltzmann equation which exhibits a gas-particle-like collision behavior. The
application of quantum assumptions to molecular encounters leads to some
divergences from the classical kinetic theory \cite{chap-cow} and despite their
formal analogies the Boltzmann equation for classical and quantum kinetic
theory present very different features. The interest in the quantum framework
of the Boltzmann equation has increased noticeably in the recent years.
Although the quantum Boltzmann equation, for a single specie of particles, is
valid for a gas of fermions as well as for a gas of bosons, blow up of the
solution in finite time can occur only in the bosonic case at low temperature. As a consequence the
quantum Boltzmann equation for a gas of bosons represents the most challenging
case both mathematically and numerically. In particular this equation has been
successfully used for computing non-equilibrium situations where Bose-Einstein
condensate occurs.  From Chapman and Cowling \cite{chap-cow} one can learn that
the Boltzmann Bose-Einstein equation (BBE) is established by imposing that,
when the mean distance between neighboring molecules is comparable with the
size of the quantum wave fields with which molecules are surrounded, a state of
congestion results. For a gas composed of Bose-Einstein identical particles,
according to quantum theory, the presence of a like particle in the
velocity-range $dv$ increases the probability that a particle will enter that
range; the presence of $f(v) dv$ particles per unit volume increases this
probability in the ratio $1+\delta f(v)$. This fundamental assumption yields
the Boltzmann Bose-Einstein equation
 \begin{equation}
\label{BBE1} \displaystyle{\frac{\partial f}{\partial t}} =
Q_{QBE}(f)(t,v),\qquad t\in \R_+,~~v\in\R^3,
 \end{equation}
where
 \begin{equation}
\label{ker}
\begin{split}
Q_{QBE}(f)(t,v) = \int_{\R^3 \times S^2} B(v-v_*, \omega)&\big( f'f_*'(1+\delta f)(1+\delta f_*)\\
& - ff_*(1+\delta f')(1+\delta f_*')\big)\ud v_* d\omega,
\end{split}
 \end{equation}
where as usual we denoted
 \[
 f= f(v), \quad f_* = f(v_*), \quad f' = f(v'), \quad f'_* = f(v'_*),
 \]
and the pairs $(v,v_*)$ (respectively $(v',v'_*)$) are the post- (respectively
pre-) collision velocities in a elastic binary collision. In \fer{ker}
$B(z,\omega)$ is the collision kernel which is a nonnegative Borel function of
$|z|, |<z,\omega>|$ only
 \be\label{cs}
 B(z,\omega) = B \left(|z|,\frac {<z,\omega>}{|z|}\right),  \qquad (z,\omega)
 \in \R^3 \times S^2.
 \ee
The solutions $f(v,t)$ are velocity distribution functions (i.e.,
the density functions of particle number), $\delta =(h/m)^3/g$, $h$
is the Planck's constant, $m$ and $g$ are the mass and the
``statistical weight'' of a particle (see \cite{Lu} for details).

For a non relativistic particle, by setting $v(p) = p/m$,  the
 collision operator $Q_{QBE}$ can be rewritten in general form as follows
 \cite{ST95, ST97}
\begin{equation}
\label{ker-p}
\begin{split}
Q_{QBE}(f)(t,p) = \int_{\R^9} W(p, p_*, p', p'_*) &\big( f'f_*'(1+\delta f)(1+\delta f_*)\\
& - ff_*(1+\delta f')(1+\delta f_*')\big)dp_*dp' dp'_*
\end{split}
 \end{equation}
where $W$ is a nonnegative measure called transition rate, which is
of the form
 \[
W(p, p_*, p', p'_*) = \Omega(p, p_*, p', p'_*)\delta (p,+ p_*- p'- p'_*)
\delta( \mathcal{E}(p) +  \mathcal{E}(p_*)-  \mathcal{E}(p')-
\mathcal{E}(p'_*)),
 \]
 where $\delta$ represents the Dirac measure and $\mathcal{E}(p)$ is the
 energy of the particle. The quantity $Wdp' dp'_*$ is the probability for the
 initial state $(p,p_*)$ to scatter and become a final state of two particles
whose momenta lie in a small region $dp' dp'_*$. The function $\Omega$ is
directly related to the differential cross section (see \fer{cs}), a quantity
that is intrinsic to the colliding particles and the kind of interaction
between them.
 The collision operator \fer{ker-p} is simplified by assuming a boson distribution
which only depends on the total energy $e = \mathcal{E}(p)$. In this
last case $f=f(e,t)$ is the boson density in energy space.

Together with the Boltzmann description given by the collision operators
\fer{ker}-\fer{ker-p}, other kinetic models for Bose-Einstein particles have
been introduced so far. In particular, a related model described by means of
Fokker-Planck type non linear operators has been proposed by Kompaneets
\cite{kompa} to describe the evolution of the radiation distribution $f(x,t)$
in a homogeneous plasma when radiation interacts with matter via Compton
scattering
 \begin{equation}
\label{kompa} Q_{K}(f)(t,\rho) = \frac 1{\rho^2}\frac{\partial}{\partial
\rho}\left[ \rho^4\left( \frac{\partial f}{\partial \rho} + f + f^2 \right)
\right], \qquad \rho\in (0, +\infty)
 \end{equation}
In that context the coordinate $\rho$ represents a momentum coordinate, $\rho =
|p|$. More precisely, an equation which includes \fer{kompa} as a particular
case is obtained in \cite{kompa} as a leading term for the corresponding
Boltzmann equation under the crucial assumption that the scattering cross
section is of the classical Thomson type (see \cite{EMV03} for details).

The fundamental assumption which leads to the correction in the
Boltzmann collision operator \fer{ker}, namely the fact that the
presence of $f(v) dv$ particles per unit volume increases the
probability that a particle will enter the velocity range $dv$ in
the ratio $1+\delta f(v)$, has been recently used  by Kaniadakis and
Quarati \cite{KQ94, KQ93} to  propose a correction to the drift term
of the Fokker-Planck equation in presence of quantum
indistinguishable particles, bosons or fermions. In their model, the
collision operator \fer{ker} is substituted by
 \begin{equation}
\label{kerFP}
 Q_{FP}(f)(t,v) = \nabla\cdot \left[ \nabla f + vf(1+\delta
f)\right].
 \end{equation}
Maybe the most remarkable difference between the kinetic operators \fer{ker} and \fer{kerFP} is that, while the former is such that mass, momentum
and energy are collision invariant, the latter does not admit the energy as collision invariant. This suggests that the operator \fer{kerFP} would
not result directly through an asymptotic procedure from the Bose-Einstein collision operator \fer{ker}, but instead from some linearized version,
where only the mass is preserved under the collision mechanism.

For a mathematical analysis of the quantum Boltzmann equation in the space homogeneous isotropic case we refer to \cite{EM99,EM01, EMV03, Lu,
Lu1}. We remark that already the issue of giving mathematical sense to the collision operator is highly nontrivial (particularly if positive
measure solutions are allowed, as required by a careful analysis of the equilibrium states). All the mathematical results, however, require very
strong cut-off assumptions on the cross-section \cite{EMV03, Lu}.

Also,  accurate numerical discretization of the quantum Boltzmann equation, which maintain the basic analytical and physical features of the
continuous problem, namely, mass and energy conservation, entropy growth and equilibrium distributions have been introduced recently in
\cite{BMP04, MP05}. Related  works \cite{Le98, PRT00, PTV03} in which fast methods for Boltzmann equations were derived using different techniques
like multipole methods, multigrid methods and spectral methods, are relevant to quote.

At the Fokker-Planck level, the qualitative analysis of the Kompaneets equation described by the operator \fer{kompa} has been exhaustively
studied in \cite{EHV98}, while the numerical simulation has been done by Buet and Cordier \cite{BC02}. To our knowledge, the mathematical study of
the Fokker-Planck equation \fer{kerFP} introduced by Kaniadakis and Quarati \cite{KQ94} has been done only very recently \cite{CRS06}, where the
one-dimensional version of \fer{kerFP} has been studied.

In the case of the quantum Boltzmann equation the asymptotic equivalence between the binary collision operators \fer{ker}, \fer{ker-p} and the
Fokker-Planck type operators \fer{kompa} and \fer{kerFP}  is unknown. This is not the case for the classical binary collisions in a elastic gas,
where the asymptotic equivalence between the Boltzmann and the Fokker-Planck-Landau equations has been proven rigorously in a series of papers by
Villani \cite{Vi98, Vi02} by means of the so-called \emph{grazing collision} asymptotics.

The same asymptotic procedure, in the case of the one-dimensional Kac equation \cite{Kac}, showed the asymptotic equivalence between Kac collision
operator and the linear Fokker-Planck operator \cite{to98}. The method of proof in \cite{to98} takes advantage from the relatively simple
structure of Kac equation. Taking this into account, in order to establish the asymptotic connection between the Boltzmann equation for Bose
Einstein particles and its Fokker-Planck description, we will introduce a one-dimensional kinetic model in the spirit of Kac caricature of a
Maxwell gas with a singular kernel. Then we will study the grazing collision limit of the equation, which leads to a Fokker-Planck type equation
in which the drift is of the form of equation \fer{kerFP}, but the coefficient of the (linear) diffusion term depends on time through the density
function. More precisely, the Fokker-Planck collision operator reads
 \begin{equation}
\label{kerFP2}
 Q_{FP}(f)(t,v) = A_t(f) \frac{\partial^2 f}{\partial v^2}+
   B_t(f)\frac{\partial}{\partial v}(v f (1+\delta f)),
 \end{equation}
where
 \bee
 A_t(f) =  \int_\R v^2 f(v, t)(1+\delta f(v,t))\ud v
 \eee
and
\bee
B_t(f) = \int_\R f(v,t) \ud v.
\eee
The paper is organized as follows. In the next Section we will introduce the model, together with some simplifications (mollified model), then in Section \ref{formal} we formally show the convergence in the grazing collision limit. We then focus on a mollified model to get rigorous results: in Section \ref{existence} we prove existence of a weak solution,  in Section \ref{sectionmoments} we focus on the moments
of the solution and on some regularity properties. Finally, in Section \ref{grazing}, we will deal with the grazing collision limit.

\section{The Kac caricature of a Bose Einstein gas}\label{k}

The simplest one-dimensional model which maintains almost all physical properties of the Boltzmann equation for a Bose-Einstein gas can be
obtained by generalizing Kac caricature of a Maxwell gas to Bose-Einstein particles. This one-dimensional model reads as follows
 \begin{equation}
\label{BBE}
\begin{cases}
\displaystyle{\frac{\partial f}{\partial t}} = Q_{QBE}(f)(t,v),\qquad t\in \R_+,~~v\in\R, \\
f(0,v) = f_0(v),
\end{cases}
\end{equation}
where
\begin{equation}
\label{kernel}
\begin{split}
Q_{QBE}(f)(t,v) = \int_{-\frac{\pi}{2}}^{\frac{\pi}{2}} \beta(\theta)\int_{\R}&\big( f'f_*'(1+\delta f)(1+\delta f_*)\\
& - ff_*(1+\delta f')(1+\delta f_*')\big)\ud v_* \ud\theta .
\end{split}
\end{equation}
For the sake of brevity, we used the usual notations
$$
f \equiv f(t,v),\quad f'\equiv f(t,v'),\quad f_* \equiv f(t,v_*),\quad f_*'\equiv f(t,v_*'),
$$
The initial datum $f_0$ is a nonnegative measurable function. The pre-collision velocities $(v',v_*')$ are defined by the Kac rotation rule
\cite{Kac}, which is given by
\begin{equation}
\label{collision}
\begin{cases}
v' = v\cos\theta - v_* \sin\theta \\
v_*' = v\sin\theta + v_*\cos\theta .
\end{cases}
\end{equation}
Collisions \fer{collision} imply the conservation of the energy at each collision
 \be \label{micrenergy} v^2+v_*^2 = v'^2 + v_*'^2.
 \ee
 Let us observe that the system \eqref{collision} can be reversed so that we can write the post-collision velocities with respect to the
pre-collision ones
\begin{equation}
\label{precollision}
\begin{cases}
v = v'\cos\theta + v_*' \sin\theta \\
v_* = -v'\sin\theta + v_*'\cos\theta.
\end{cases}
\end{equation}
The parameter $\delta$ in \eqref{kernel} is a positive constant. The choice $\delta=0$ would lead us back to standard  Kac model, whereas $\delta$
negative would lead us to the Boltzmann-Fermi-Dirac equation, whose features are very different from that of the Boltzmann-Bose-Einstein equation.

The cross-section $\beta(\theta)$ is a function defined over $(-\frac{\pi}{2},\frac{\pi}{2})$. In the original Kac equation \cite{Kac},
$\beta(\theta)$ is assumed constant, which implies that collisions spread out uniformly with respect to the angle $\theta$. Following Desvillettes
\cite{desville2}, we will here assume that the cross-section is suitable to concentrate collisions on the \emph{grazing} ones (these collisions
are those that are neglected when the cut-off assumption is made). This corresponds to satisfy one or more of the following hypotheses

\begin{description}
\item[H1] $\beta(\theta)$ is a nonnegative even function.
\item[H2] $\beta(\theta)$ satisfies a non-cutoff assumption on the form
\be \label{crosssection} \beta(\theta) \sim \frac{1}{|\theta|^{1+\nu}} \qquad
\textrm{when}~~\theta \to 0, \ee with $1<\nu<2$. That is,
$$
\int_{-\frac{\pi}{2}}^{\frac{\pi}{2}}\beta(\theta)|\sin \theta|\ud\theta =
+\infty
$$
whereas

$$
\int_{-\frac{\pi}{2}}^{\frac{\pi}{2}}\beta(\theta)|\sin
\theta|^{\nu+\varepsilon}\ud\theta < +\infty
$$
for all $\varepsilon >0$.
\item[H3] $\beta(\theta)$ is zero near $-\frac{\pi}{2}$
and $\frac{\pi}{2}$, namely there exists a small constant $\varepsilon_0>0$ such that
$$
\beta(\theta) = 0 \qquad \forall \theta \in
(-\frac{\pi}{2},-\frac{\pi}{2}+\varepsilon_0] \cap
[\frac{\pi}{2}-\varepsilon_0,\frac{\pi}{2}).
$$

\end{description}

In the case in which the classical Kac equation is concerned, the asymptotic equivalence between the non cut-off Kac equation and the linear
Fokker-Planck equation as collisions become grazing has been proven in \cite{to98}. Hence, the passage to grazing collisions in \fer{kernel},
would give us the correct Fokker-Planck type operator which leads the initial density towards the Bose-Einstein distribution.

Due to the symmetries of the kernel \eqref{kernel} and to the microscopic conservation of the energy \eqref{micrenergy}, it can be easily shown,
at least at a formal level, that the mass and the global energy of the solution are conserved
$$
\int_{\R} f(t,v)\ud v = \int_{\R}f_0(v)\ud v ,
$$
and
$$
\int_{\R} v^2f(t,v)\ud v = \int_{\R}v^2f_0(v)\ud v ,
$$
for all $t>0$. Moreover, if
$$
H(f) = \int_{\R} \left( \frac{1}{\delta}(1+\delta f)\log (1+\delta f) -f\log f
\right)\ud v
$$
denotes the Bose-Einstein entropy, the time derivative of $H(f)$ is given by
\bee
\begin{split}
D(f) = \frac{1}{4}\int_{-\frac{\pi}{2}}^{\frac{\pi}{2}}\beta(\theta)\int_{\R^2}\Gamma\Big( ff_*'(1+\delta f)(1+&\delta f_*),\\
& ff_*(1+\delta f')(1+\delta f_*')\Big)\ud v\ud v_*\ud\theta
\end{split}
\eee with

\be \Gamma(a,b) = \begin{cases}
        (a-b)\log (a/b),\quad &a>0,~~b>0; \\
        +\infty,        &a>0,~~b=0~~\textrm{or} ~~a=0,~~b>0;\\
        0,          &a=b=0.
              \end{cases}
\ee Then, since $D(f) \ge 0$ the solution $f(t)$ to equation
\fer{BBE} satisfies formally an $H$-theorem: $H(f(t))$ is
monotonically increasing unless $f(t)$ coincides with the
Bose-Einstein density $f_{BE}$, defined by the relationship
 \be\label{bose-e}
 \frac{f_{BE}(v)}{1 + \delta f_{BE}(v)} = a e^{-bv^2},
 \ee
where $a$ and $b$ are positive constant chosen to satisfy the mass and energy conservation for $f_{BE}$.

It can be easily verified by direct inspection that the fourth order term in
\fer{kernel} cancels out from the collision integral, so that it can be
rewritten as
\begin{equation}
\label{kerner2}
\begin{split}
Q_{QBE}(f)(t,v) = \int_{-\frac{\pi}{2}}^{\frac{\pi}{2}} \beta(\theta)\int_{\R}&\big( f'f_*'(1+\delta f+\delta f_*)\\
& - ff_*(1+\delta f'+\delta f_*')\big)\ud v_* \ud\theta.
\end{split}
\end{equation}

In trying to give a rigorous signification to equation \eqref{BBE}, several difficulties arise. In fact, our non-cutoff cross-section
$\beta(\theta)$ does not allow us neither to use the same change of variable as in \cite{Lu}, nor to use the same weak formulation as in
\cite{desville2}. A sufficient condition to give a sense to the collision kernel would be that $f\in L^\infty(\R_+\times\R)$. It would even be
enough that such a condition hold for the quantum part, that is for the $f$ involved in the terms of the form $1+\delta f$.

To satisfy that condition, we introduce a new model where the  quantum part is smoothed. Let $\psi$ be a mollifier, that is

\begin{enumerate}
  \item $\psi \in C_c^\infty(\R)$
  \item $\psi \geq 0$
  \item $\int_\R \psi(v)\ud v =1$.
\end{enumerate}
Then, let
$$
\tilde f(t,v) = \int_\R f(t,v-w)\psi(w)\ud w = f(t,.)*_v \psi.
$$
The function $\tilde f$ is regular in the velocity variable, and relies uniformly in all the $L^p(\R)$ spaces (for $1\leq p \leq +\infty$) since
the $L^1(\R)$ norm of $f(t,.)$ is constant. Moreover, $\tilde f(t,.)$ is as close as we want to $f(t,.)$ in all these norms, provided $\psi$ is
well chosen, so that our new model is nothing but an approximation of \eqref{BBE}:
\begin{equation}
  \label{BBEmod}
\begin{cases}
  & \displaystyle{\frac{\partial f}{\partial t} = \tilde Q_{QBE}(f)}, \qquad t\in \R_+, v\in\R \\
  & f(0,v) = f_0(v)
\end{cases}
\end{equation}
with

\begin{equation}
  \label{Kernelmod}
\begin{split}
\tilde Q_{QBE}(f)(t,v) = \int_{-\frac{\pi}{2}}^{\frac{\pi}{2}} \beta(\theta)\int_{\R}&\big( f'f_*'(1+\delta \tilde f)(1+\delta \tilde f_*)\\ & -
ff_*(1+\delta  \tilde f')(1+\delta\tilde f_*')\big)\ud v_* \ud\theta.
\end{split}
\end{equation}
This approximation still formally preserves mass and energy, while maintaining the same nonlinearity of the original collision operator. It has to
be remarked, however, that both the validity of the $H$-theorem and the explicit form of the steady solution are lost. Other approximations can be
introduced, which do not exhibit this problem. Among others, the operator
 \begin{equation}
\label{good}
\begin{split}
\tilde Q_{QBE}(f)(t,v) = \int_{-\frac{\pi}{2}}^{\frac{\pi}{2}} \beta(\theta)\int_{\R}&\big( \frac{f'}{1+\delta f'}\frac{f_*'}{1+\delta f_*'}
 - \frac{f}{1+\delta f}\frac{f_*}{1+\delta f_*}\big)\cdot \\ & \cdot(1+\delta \tilde f)(1+\delta \tilde f_*)(1+\delta \tilde f')
 (1+\delta \tilde f_*')\ud v_*
\ud\theta.
\end{split}
\end{equation}
preserves mass and energy, satisfies the $H$-theorem and possesses the right steady state. Unlikely, the nonlinearity of \fer{good} is difficult
to handle for our purposes since its Fourier transform can not be written in a simple form.


Let us end this Section with a few notations. The functional spaces that will be used in the following, apart from the usual Lebesgue spaces, are
the weighted Lebesgue spaces, defined, for $p>0$, by the norm
$$
\|g\|_{L^1_p(\R)} = \int_\R (1+|v|^p)|g(v)|\ud v.
$$
We will also need some Sobolev spaces, defined for $0<s<1$ by the norm
$$
\|g\|_{H^s(\R)}^2= \|g\|_{L^2}^2 + |g|^2_{H^s}
$$
where
$$
|g|^2_{H^s} = \iint_{\R^2}\frac{|g(x+y)-g(y)|^2}{|y|^{1+2s}}\ud x\ud y.
$$
Our convention for the Fourier transform is the following:
$$
\hat f (\xi)=\mathcal F (f)(\xi) = \int_\R f(v)e^{-iv\xi}\ud v
$$
and the inverse Fourier transform is given by
$$
f(v) = \frac{1}{2\pi}\int_\R \hat f (\xi)e^{iv\xi}\ud\xi.
$$
We will sometimes use the notations
$$
m = \int_\R f_0(v)\ud v
$$
and
$$
e = \int_\R v^2 f_0(v)\ud v.
$$

\section{Formal results}
\label{formal}

In this section we will show how the grazing collision limit work on the Kac model for bosons \eqref{BBE2}

 \begin{equation}
\label{BBE2}
\begin{cases}
\displaystyle{\frac{\partial f}{\partial t}} = Q_{QBE}(f)(t,v),\qquad t\in \R_+,~~v\in\R, \\
f(0,v) = f_0(v),
\end{cases}
\end{equation}
where

\begin{equation}
\label{kernel2}
\begin{split}
Q_{QBE}(f)(t,v) = \int_{-\frac{\pi}{2}}^{\frac{\pi}{2}} \beta(\theta)\int_{\R}&\big( f'f_*'(1+\delta f)(1+\delta f_*)\\
& - ff_*(1+\delta f')(1+\delta f_*')\big)\ud v_* \ud\theta.
\end{split}
\end{equation}
As we pointed out before, it is not known how to prove that solutions to this model exist; hence we will give only formal results, assuming that a solution to this equation exists and is regular enough.

\subsection{$H$-theorem and regularity of the solution}

Formally, the solutions to \eqref{BBE2} satisfy the $H$-theorem as pointed out in the previous section:

\begin{thm}
Let $f$ be a solution of the problem \eqref{BBE2}, with $f_0 \in L\log L (\R)$, and assume that $H(f)$ and $D(f)$ are well defined. Then
$$
H(f(t,.)) = H(f_0) + \int_0^t D(f(s,.))\ud s\qquad \forall t>0.
$$
Consequently, the entropy $H$ is increasing along the solution.
\end{thm}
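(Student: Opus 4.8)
The plan is to differentiate $H(f(t,\cdot))$ in time and show that the result equals $D(f)$, which was already identified as the entropy production functional in the previous section. Writing $\eta(f) = \frac{1}{\delta}(1+\delta f)\log(1+\delta f) - f\log f$, I would first compute $\eta'(f) = \log(1+\delta f) - \log f = \log\frac{1+\delta f}{f}$, so that, differentiating under the integral sign,
\begin{equation*}
\frac{\ud}{\ud t}H(f(t,\cdot)) = \int_\R \eta'(f)\,\frac{\partial f}{\partial t}\,\ud v = \int_\R \log\frac{1+\delta f}{f}\; Q_{QBE}(f)\,\ud v.
\end{equation*}
The heart of the argument is then the usual symmetrization of the collision integral. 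Using the weak formulation, I would insert the test function $\varphi(v) = \log\frac{1+\delta f}{f}$ against $Q_{QBE}(f)$ and exploit the invariance of the collision rule \fer{collision} under the change of variables $(v,v_*)\mapsto(v',v_*')$, whose Jacobian is $1$ because the Kac rotation is orthogonal, together with the symmetry $v\leftrightarrow v_*$ and the evenness of $\beta$ from \textbf{H1}. Averaging the four relabelings produces the symmetric combination
\begin{equation*}
\frac{1}{4}\int_{-\frac\pi2}^{\frac\pi2}\beta(\theta)\int_{\R^2}\bigl(A - B\bigr)\Bigl(\log\frac{1+\delta f}{f} + \log\frac{1+\delta f_*}{f_*} - \log\frac{1+\delta f'}{f'} - \log\frac{1+\delta f_*'}{f_*'}\Bigr)\ud v\,\ud v_*\,\ud\theta,
\end{equation*}
where $A = f'f_*'(1+\delta f)(1+\delta f_*)$ and $B = ff_*(1+\delta f')(1+\delta f_*')$. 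The key observation is that the logarithmic bracket is exactly $\log(A/B)$, since the arguments recombine as $\log\frac{(1+\delta f)(1+\delta f_*)f'f_*'}{f f_* (1+\delta f')(1+\delta f_*')} = \log(A/B)$. Hence the integrand becomes $(A-B)\log(A/B) = \Gamma(A,B)$, matching the definition of $D(f)$ from the previous section, and integrating in $t$ gives the stated identity.

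After establishing $\frac{\ud}{\ud t}H = D(f)$, the monotonicity is immediate: the function $(a-b)\log(a/b)$ is nonnegative for all $a,b>0$ (both factors share the same sign), so $\Gamma(a,b)\geq 0$ by its very definition, whence $D(f)\geq 0$ and $H(f(t,\cdot))$ is nondecreasing, with equality in the entropy production forcing $A=B$ pointwise, i.e.\ the detailed-balance relation leading to the Bose-Einstein density \fer{bose-e}.

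The main obstacle is not algebraic but analytic: all of the above is formal, and the delicate points are the justification of differentiation under the integral sign and, more seriously, the legitimacy of the symmetrizing change of variables when $\beta$ has the non-integrable singularity \textbf{H2}. The four relabeled integrals are only conditionally convergent individually, so one cannot split the symmetrized integral back into separate pieces; the argument must be carried out on the already-symmetrized form, where the grazing singularity is tamed by the vanishing of the bracket as $\theta\to 0$. Since the statement explicitly assumes that $H(f)$ and $D(f)$ are well defined and that the solution is regular enough, I would invoke these hypotheses to legitimize the manipulations rather than prove integrability from scratch; this is consistent with the paper's stated intention that the results of this section are purely formal.
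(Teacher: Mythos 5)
Your proposal is correct and follows essentially the same route as the paper, which states this theorem without a written proof and relies on the standard formal computation already sketched in Section 2: differentiating $H$ along the flow, symmetrizing the collision integral via the unit-Jacobian changes of variables $(v,v_*,\theta)\mapsto(v',v_*',-\theta)$ and $(v,v_*)\mapsto(v_*,v)$, recognizing the logarithmic bracket as $\log(A/B)$ so that $\frac{\ud}{\ud t}H(f)=D(f)$, and concluding monotonicity from $\Gamma\geq 0$. Your closing caveats about differentiation under the integral sign and the non-integrable singularity of $\beta$ are consistent with the paper's own stance that the results of this section are purely formal, given that $H(f)$ and $D(f)$ are assumed well defined.
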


Using the $H$-theorem, we can give an {\it a priori} regularity estimate on the
solution $f$ to \eqref{BBE2}.

\begin{thm}
\label{regularity} Let $\beta$ satisfy the properties (H1), (H2) and
(H3), and let $f(t,v)$ be a solution of the problem \eqref{BBE2}
with the initial datum $f_0 \in L^1_2 \cap L\log L$. Assume that $f$
satisfies the $H$-theorem. Then we have
$$
\log(1+\delta f) \in L^2_{loc}(\R_+;H^{\nu/2}(\R)).
$$
If in addition $f\in L^\infty(\R_+\times\R)$, then

$$
f \in L^2_{loc}(\R_+;H^{\nu/2}(\R)).
$$
\end{thm}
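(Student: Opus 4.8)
The plan is to read off the Sobolev regularity from the entropy dissipation $D(f)$, whose time integral is controlled by the $H$-theorem. First I would observe that, since the mass and the energy are conserved, $H(f(t,\cdot))$ stays bounded above by $H(f_{BE})$, where $f_{BE}$ is the Bose--Einstein density \fer{bose-e} carrying the same mass and energy; combined with the $H$-theorem this yields
\bee
\int_0^T D(f(s,\cdot))\,\ud s = H(f(T,\cdot)) - H(f_0) \leq H(f_{BE}) - H(f_0) < +\infty
\eee
for every $T>0$. Hence it is enough to bound the time-integrated $H^{\nu/2}$-seminorm of $\log(1+\delta f)$ by $D(f)$ up to terms controlled by the conserved mass $m$ and energy $e$.

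Next I would bound the entropy production density from below. Writing $a = f'f_*'(1+\delta f)(1+\delta f_*)$ and $b = ff_*(1+\delta f')(1+\delta f_*')$, the elementary inequality $(a-b)\log(a/b)\ge 4(\sqrt a-\sqrt b)^2$ gives
\bee
D(f) \geq \int_{-\pi/2}^{\pi/2}\b(\t)\int_{\R^2}\left(\sqrt a-\sqrt b\right)^2\ud v\,\ud v_*\,\ud\t .
\eee
The crucial elementary fact is the comparison $(\sqrt{1+x}-\sqrt{1+y})^2\ge \tfrac14\left(\log(1+x)-\log(1+y)\right)^2$ for $x,y\ge0$ (write $\sqrt{1+x}=e^{u/2}$ with $u=\log(1+x)$ and use $\frac{d}{du}e^{u/2}\ge\frac12$): it shows that increments of $\sqrt{1+\delta f}$ dominate increments of $\log(1+\delta f)$, and it is precisely the factors $(1+\delta f)$ carried by $a$ and $b$ that will produce the stated regularity of $\log(1+\delta f)$ rather than of $f$ or of $\log f$.

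Then I would extract the fractional seminorm. Decompose
\bee
\sqrt a-\sqrt b = \sqrt{(1+\delta f)(1+\delta f_*)}\left(\sqrt{f'f_*'}-\sqrt{ff_*}\right) + \sqrt{ff_*}\left(\sqrt{(1+\delta f)(1+\delta f_*)}-\sqrt{(1+\delta f')(1+\delta f_*')}\right),
\eee
and focus on the second group, which carries the $\sqrt{1+\delta f}$ increments; a further splitting of its product increment reduces it to single-variable increments $\sqrt{1+\delta f}-\sqrt{1+\delta f'}$ weighted by the remaining factors. Fixing $v_*$ and using the change of variables $v\mapsto v'=v\cos\t - v_*\sin\t$ (Jacobian $\cos\t$, close to $1$ for grazing $\t$), the shift is $v'-v\sim -v_*\t$, so the singularity $\b(\t)\sim|\t|^{-1-\nu}$ of \fer{crosssection} reconstructs, after integrating the remaining variable against $f_*$ (the factor $|v_*|^\nu\le 1+|v_*|^2$ being absorbed by mass and energy), the seminorm $|\log(1+\delta f)|^2_{H^{\nu/2}}$; the Fourier representation advertised in the introduction is convenient for this reconstruction. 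The $L^2$ part of the $H^{\nu/2}$ norm is harmless: since $\log(1+\delta f)\le\delta f$ and, on $\{f>1\}$, $\log^2 f\le f\log f$, the $L^2$ norm of $\log(1+\delta f)$ is bounded by the mass and the $L\log L$ norm, which are under control. The main obstacle is exactly this extraction: $\sqrt a-\sqrt b$ is a genuinely two-variable, nonlinear object, and isolating the single-variable increment of $\sqrt{1+\delta f}$ while keeping the nonnegative $\sqrt f$-channel and all cross terms under control — estimating the errors produced by the $v_*$-integration only through the conserved mass and energy — requires a careful Cauchy--Schwarz argument and is the technical heart of the proof.

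Finally, if moreover $f\in L^\infty(\R_+\times\R)$ with $\|f\|_\infty\le M$, the map $\varphi(t)=\log(1+\delta t)$ satisfies $\varphi'(t)=\delta/(1+\delta t)\ge \delta/(1+\delta M)$ on $[0,M]$, so that $|f(x)-f(y)|\le \frac{1+\delta M}{\delta}\left|\log(1+\delta f(x))-\log(1+\delta f(y))\right|$ pointwise. This bi-Lipschitz comparison transfers the seminorm bound from $\log(1+\delta f)$ to $f$, while $f\in L^2$ follows from $\|f\|_{L^2}^2\le\|f\|_\infty\|f\|_{L^1}$; together these give $f \in L^2_{loc}(\R_+;H^{\nu/2}(\R))$.
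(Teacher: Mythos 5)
Your outer framework is sound: the time-integrability of $D(f)$ via $H(f(t))\le H(f_{BE})$, the reduction to a functional inequality of the type ``entropy dissipation controls an $H^{\nu/2}$ quantity'', and the final bi-Lipschitz transfer from $\log(1+\delta f)$ to $f$ when $f\in L^\infty$ are all correct (the first and last even make explicit steps the paper leaves implicit). But the core step fails as proposed. From $D(f)\gtrsim\int\b(\t)\iint(\sqrt a-\sqrt b)^2$ you decompose $\sqrt a-\sqrt b=A+B$, with $A$ the ``classical'' group and $B=\sqrt{ff_*}\bigl(\sqrt{(1+\delta f)(1+\delta f_*)}-\sqrt{(1+\delta f')(1+\delta f_*')}\bigr)$ the ``quantum'' group, and you then propose to ``focus on the second group''. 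This extraction is not available: a bound on $\int(A+B)^2$ gives no bound on $\int B^2$, because $A$ and $B$ can cancel pointwise; no Cauchy--Schwarz argument repairs this, and you do not supply one --- you only defer it as ``the technical heart''. Worse, even if the extraction were granted, $B^2$ carries the weight $f(v)f(v_*)$ in front of the increment, whereas the entropy-dissipation regularity lemma of Alexandre--Desvillettes--Villani--Wennberg \cite{regularitykac} (the tool the paper invokes and that you implicitly need) requires increments $(F(f')-F(f))^2$ weighted by $f_*$ alone, i.e.\ by a function of the non-active variable. A weight containing $f(v)$ degenerates exactly where $f$ is small while $f'$ is not, and can only yield a weighted, strictly weaker conclusion than $\log(1+\delta f)\in H^{\nu/2}$.

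The paper's proof is built precisely to avoid both obstructions, and it never splits the classical and quantum contributions. Using the changes of variables $(v,v_*,\t)\mapsto(v',v_*',-\t)$ and $(v,v_*)\mapsto(v_*,v)$ inside $D(f)$, it writes $D(f)=I_1+I_2$, where
\bee
I_1=\int_{-\pi/2}^{\pi/2}\b(\t)\iint_{\R^2} f_*(1+\delta f_*')\Bigl(x\log\tfrac{x}{y}-x+y\Bigr)\ud v\ud v_*\ud\t,
\qquad x=f(1+\delta f'),\quad y=f'(1+\delta f),
\eee
whose integrand is pointwise nonnegative and bounded below by $(\sqrt x-\sqrt y)^2$, and where $I_2$, which contains the difference $f-f'$ (note the exact cancellation $f(1+\delta f')-f'(1+\delta f)=f-f'$), is computed explicitly as $\|f\|_{L^1}^2\int\b(\t)(1-1/\cos\t)\ud\t$, finite by (H2)--(H3) after the Jacobian change of variable $v\mapsto v'$. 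The mixed quantity $\sqrt{f(1+\delta f')}-\sqrt{f'(1+\delta f)}$ --- your two groups fused into a single coherent object --- is then converted into $\log(1+\delta f')-\log(1+\delta f)$ through a uniform lower bound $|\phi|\ge c_2$ on their quotient, leaving the weight $f_*(1+\delta f_*')\ge f_*$, which is exactly the structure the lemma of \cite{regularitykac} accepts. This algebraic fusion (ultimately reflecting that $x/y$ equals the ratio of $f/(1+\delta f)$ to $f'/(1+\delta f')$) is the missing idea; without it, or a genuine substitute for it, your argument does not close.
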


\begin{proof}
We will do the computations as if the cross-section and the function $f$ were smooth. Using the classical changes of variable
$(v,v_*,\theta)\mapsto (v',v_*',-\theta)$ and $(v,v_*)\mapsto(v_*,v)$ which have unit jacobian, we have:

\begin{equation*}
\begin{split}
D(f) &= \frac{1}{4}\int_{-\frac{\pi}{2}}^{\frac{\pi}{2}}\beta(\theta)
\int_{\R^2}\Big(f'f_*'(1+\delta f)(1+\delta f_*) - ff_*(1+\delta f')(1+\delta f_*')\Big) \\
&\qquad\qquad\qquad\qquad\times\log \frac{f'f_*'(1+\delta f)(1+\delta f_*)}
{ff_*(1+\delta f')(1+\delta f_*')}\ud v\ud v_*\ud\theta\\
&= -\frac{1}{2}\int_{-\frac{\pi}{2}}^{\frac{\pi}{2}}\beta(\theta)
\int_{\R^2}\Big(f'f_*'(1+\delta f)(1+\delta f_*) - ff_*(1+\delta f')(1+\delta f_*')\Big)\\
&\qquad\qquad\qquad\qquad\times\log\Big(ff_*(1+\delta f')(1+\delta f_*')\Big)\ud v\ud v_*\ud\theta\\
&=-\int_{-\frac{\pi}{2}}^{\frac{\pi}{2}}\beta(\theta)
\int_{\R^2}\Big(f'f_*'(1+\delta f)(1+\delta f_*) - ff_*(1+\delta f')(1+\delta f_*')\Big)\\
&\qquad\qquad\qquad\qquad\times\log\Big(f(1+\delta f')\Big)\ud v\ud v_*\ud\theta\\
&=\int_{-\frac{\pi}{2}}^{\frac{\pi}{2}}\beta(\theta)
\int_{\R^2}ff_*(1+\delta f')(1+\delta f_*')\log\frac{f(1+\delta f')}{f'(1+\delta f)}\ud v\ud v_*\ud\theta\\
&=\int_{-\frac{\pi}{2}}^{\frac{\pi}{2}}\beta(\theta)
\int_{\R^2}f_*(1+\delta f_*')\Big(f(1+\delta f')\log\frac{f(1+\delta f')}{f'(1+\delta f)} \\
&\qquad\qquad\qquad\qquad\qquad\qquad\qquad-f(1+\delta f')
+ f'(1+\delta f) \Big)\ud v\ud v_*\ud\theta\\
&\qquad\qquad\quad +\int_{-\frac{\pi}{2}}^{\frac{\pi}{2}}
\beta(\theta)\int_{\R^2}f_*(1+\delta f_*')\Big(f(1+\delta f') - f'(1+\delta f) \Big)\ud v\ud v_*\ud\theta\\
&= I_1 + I_2.
\end{split}
\end{equation*}
The term $I_2$ can be treated easily, because it can be written as
$$
I_2 = \int_{-\frac{\pi}{2}}^{\frac{\pi}{2}}\beta(\theta)\int_{\R^2}f_*(1+\delta f_*')(f - f' )\ud v\ud v_*\ud\theta,
$$
and the presence of $f-f'$ involves strong cancellations. In fact, the term $I_2$ verifies
 \be\label{pp}
I_2= \|f\|_{L^1}^2\int_{-\frac{\pi}{2}}^{\frac{\pi}{2}}\beta(\theta)\left(1-\frac{1}{\cos\theta}\right)\ud\theta.
 \ee
To prove \fer{pp}, consider that
 \bee
\begin{split}
I_2 &= \int_{-\frac{\pi}{2}}^{\frac{\pi}{2}}\beta(\theta)
\int_{\R^2}f_*(f - f' )\ud v\ud v_*\ud\theta + \delta\int_{-\frac{\pi}{2}}^{\frac{\pi}{2}}
\beta(\theta)\int_{\R^2}f_*f_*'(f - f' )\ud v\ud v_*\ud\theta \\
&= I_2^1 + I_2^2
\end{split}
\eee
Thanks to the change of variable $(v,v_*,\theta)\mapsto (v',v_*',-\theta)$, we see that $I_2^2=0$. On the second part of $I_2^1$, we use the
change of variable $v\mapsto v'$ with $v_*$ and $\theta$ fixed, which jacobian is
$$
\frac{\ud v}{\ud v'} = \frac{1}{\cos\theta}.
$$
Therefore
$$
I_2^1 = \int_{-\frac{\pi}{2}}^{\frac{\pi}{2}}\beta(\theta)\left(1-\frac{1}{\cos\theta}\right)\int_{\R^2}f_*f\ud v\ud v_*\ud\theta
$$
and from this \fer{pp} follows.

Using now the inequality
$$
x\log\frac{x}{y}-x+y \geq \left( \sqrt{x}-\sqrt{y}\right)^2,\qquad \forall x,y >0,
$$
we obtain
 \bee I_1 \geq \int_{-\frac{\pi}{2}}^{\frac{\pi}{2}}\beta(\theta) \int_{\R^2}f_*(1+\delta f_*')\Big(\sqrt{f(1+\delta
f')}-\sqrt{f'(1+\delta f)}\Big)^2 \ud v\ud v_* \ud\theta.
 \eee
 Therefore
  \be \label{eqreg} \int_{-\frac{\pi}{2}}^{\frac{\pi}{2}}\beta(\theta)
\int_{\R^2}f_*(1+\delta f_*')\Big(\sqrt{f(1+\delta f')}-\sqrt{f'(1+\delta f)}\Big)^2 \ud v\ud v_* \ud\theta \leq D(f) + c_1\|f\|_{L^1}^2 \ee where
$$
c_1 = \int_{-\frac{\pi}{2}}^{\frac{\pi}{2}}\beta(\theta)\left(\frac{1}{\cos\theta}-1\right)\ud\theta>0.
$$
Now, we write
$$
\sqrt{f(1+\delta f')}-\sqrt{f'(1+\delta f)} = (\log (1+\delta f') -
(\log 1+\delta f))\frac{\sqrt{f(1+\delta f')}-\sqrt{f'(1+\delta f)}}{\log (1+\delta f') - \log (1+\delta f)}.
$$
For $0<a<x$, let
$$
\phi(x) = \frac{\sqrt{x(1+\delta a)}-\sqrt{a(1+\delta x)}}{\log(1+\delta a) - \log(1+\delta x)}.
$$
Then, there exists some constant $c_2>0$ that does not depend on $x$ or $a$ such that
$$
|\phi(x)|>c_2 \qquad \forall x>a.
$$
From this inequality we deduce that
$$
c_2\int_{-\frac{\pi}{2}}^{\frac{\pi}{2}}\beta(\theta)
\int_{\R^2}f_*(1+\delta f_*')\Big(\log (1+\delta f') - \log( 1+\delta f)\Big)^2 \leq D(f) + c_1\|f\|_{L^1}^2
$$
It has been shown in \cite{regularitykac} that if $F$ is a real function such that $F(f) \in L^2(\R)$ satisfying
$$
\int_{-\frac{\pi}{2}}^{\frac{\pi}{2}}\beta(\theta)\int_{\R^2}f_*\Big(F ( f') - F( f)\Big)^2 \leq D(f) + c_1\|f\|_{L^1}^2,
$$
then the following inequality holds:
$$
\|F(f)\|_{H^{\nu/2}}^2 \leq D(f) + c_1\|f\|_{L^1}^2.
$$
Taking $F(f) = c_2 \log (1+\delta f)$ the result follows.
\end{proof}

\subsection{Moments of the solution and the grazing collision limit}

We can now make precise assumptions on the
asymptotics of the grazing collisions, namely in letting the kernel $\beta$ concentrate on the singularity $\theta = 0$. We will introduce a
family of kernels $\{ \b_\e(\vt) \}_{\e > 0}$ satisfying hypotheses (H1) and (H2), with
\be
\label{Nbetaepsilon2}
\forall \theta_0>0~~~\sup_{\theta>\theta_0}\b_\e(\vt) \underset{\e\to 0}{\longrightarrow}0
\ee
and
 \be\label{Nbetaepsilon}
\lim_{\e \to 0^+} \int_0^\pi \b_\e(\vt)\t^2 \, d\t = 1
 \ee
This can be obtained in several ways, for example taking, for $0<\mu <1$
 \[
\b_\e(\vt) = \frac{2(1-\mu)}{\e\vt^{2+\mu}} \quad  0 \le |\t| \le
\e^{1/(1-\mu)},
 \]
 \[
 \b_\e(\vt) = \frac{(1-\mu)\e}{\vt^{2+\mu}} \quad {\rm elsewhere}.
 \]

Let $f_\e$ be a solution of
 \begin{equation}
\label{BBE2e}
\begin{cases}
\displaystyle{\frac{\partial f_\e}{\partial t}} = Q_{QBE}^\e(f_\e)(t,v),\qquad t\in \R_+,~~v\in\R, \\
f_\e(0,v) = f_0(v),
\end{cases}
\end{equation}
 where $\beta_\e(\theta)$ has replaced $\beta(\theta)$. The grazing collision limit is obtained when $\e\to 0$. Our strategy is first to take the Fourier transform in the velocity variable of equation \eqref{BBE2e}, then to pass to the limit $\e\to 0$ in the Fourier formulation, and finally to recognize at the limit the Fourier formulation of a quantum Fokker-Planck equation.

Let us take the Fourier transform of \eqref{BBE2e}:

\bee
\begin{split}
\frac{\partial \hat f_\e(t,\xi)}{\partial t} = \int_{-\frac{\pi}{2}}^{\frac{\pi}{2}}\int_{\R^2}\beta_\e(\theta)f_\e f_{\e,*}
(1+\delta  {f_\e}(v')+\delta  {f_\e}(v_*'))(e^{-iv'\xi}-e^{-iv\xi})\ud v\ud v_*\ud\theta.
\end{split}
\eee
We can split the integral on the right-hand side into three parts. The first part gives
 \bee
\begin{split}
&\int_{-\frac{\pi}{2}}^{\frac{\pi}{2}}\int_{\R^2}\beta_\e(\theta)f_\e f_{\e,*}(e^{-iv'\xi}-e^{-iv\xi})\ud v\ud v_*\ud\theta\\
&=\int_{-\frac{\pi}{2}}^{\frac{\pi}{2}}\beta_\e(\theta) \left( \hat f_\e(\xi\cos\theta)\hat f_\e(\xi\sin\theta) -\hat f_\e(\xi) \hat f_\e(0) \right) \ud\theta.
\end{split}
\eee
The second part can be evaluated using the inverse Fourier transform of the function ${f_\e}$ which is supposed to rely in $L^2$
\bee
\begin{split}
&\delta\int_{-\frac{\pi}{2}}^{\frac{\pi}{2}}\int_{\R^2}\beta_\e(\theta)f_\e f_{\e,*} f_\e(v')(e^{-iv'\xi}-e^{-iv\xi})\ud v\ud v_*\ud\theta\\
&= \frac{\delta}{2\pi}\int_{-\frac{\pi}{2}}^{\frac{\pi}{2}}\beta_\e(\theta)\int_{\R^3}f_\e f_{\e,*} \hat  f_\e(\eta) e^{i\eta(v\cos\theta
-v_*\sin\theta)}\cdot \\ &
\cdot\left( e^{-i\xi(v\cos\theta -v_*\sin\theta)}-e^{-iv\xi} \right)\ud\eta\ud v\ud v_*\ud\theta\\
&=\frac{\delta}{2\pi}\int_{-\frac{\pi}{2}}^{\frac{\pi}{2}}\beta_\e(\theta)\int_\R
\hat f_\e((\xi-\eta)\cos\theta) \hat f_\e((\xi-\eta)\sin\theta)\hat f_\e(\eta) \ud\eta\ud\theta \\
&\qquad\qquad -\frac{\delta}{2\pi}\int_{-\frac{\pi}{2}}^{\frac{\pi}{2}}\beta_\e(\theta)
\int_\R \hat f_\e(\xi-\eta\cos\theta) \hat f_\e(\eta\sin\theta)\hat f_\e(\eta) \ud\eta\ud\theta .
\end{split}
\eee
The third term can be computed in the same way. At the end we get that the Fourier transform of $f_\e$ satisfies

\begin{equation}
\label{Nfourier}
\begin{split}
\frac{\partial \hat f_\e(t,\xi)}{\partial t} &= \int_{-\frac{\pi}{2}}^{\frac{\pi}{2}}
\beta_\e(\theta) \left( \hat f_\e(\xi\cos\theta)\hat f_\e(\xi\sin\theta) - \hat f_\e (\xi) \hat f_\e(0)\right)\ud\theta\\
&+\frac{\delta}{2\pi}\int_{-\frac{\pi}{2}}^{\frac{\pi}{2}}\beta_\e(\theta)
\int_\R \bigg[\hat f_\e((\xi-\eta)\cos\theta) \hat f_\e((\xi-\eta)\sin\theta)\\
&\qquad\qquad\qquad- \hat f_\e(\xi-\eta\cos\theta) \hat f_\e(\eta\sin\theta)\bigg]
\hat f_\e(\eta) \ud\eta\ud\theta\\
&+\frac{\delta}{2\pi}\int_{-\frac{\pi}{2}}^{\frac{\pi}{2}}\beta_\e(\theta)
\int_\R \bigg[\hat f_\e(\xi\cos\theta-\eta\sin\theta) \hat f_\e(-\xi\sin\theta-\eta\cos\theta)\\
&\qquad\qquad\qquad- \hat f_\e(\xi-\eta\sin\theta) \hat f_\e(-\eta\cos\theta)\bigg]\hat f_\e(\eta) \ud\eta\ud\theta.
\end{split}
\end{equation}

To pass to the limit $\e\to 0$, we need some regularity on $\hat f_\e$, or equivalently, $f_\e$ must have bounded moments. The following result will be proved at the end of Section \ref{grazing}.

\begin{thm}
\label{grazingformal} Assume that the cross-section $\beta$ satisfies (H1), (H2), with $1<\nu<\sqrt{5}-1$. Assume that there exists a solution
$f_\e$ to the problem \eqref{BBE2e}. Assume that $f_\e$ is regular, in the sense that $\|f_\e(t)\|_{L^\infty(\R)}$ and
$\|f_\e(t)\|_{H^{\frac{\nu}{2}}}$ are in $L^p(\R_+)$ for some $p$ big enough; then there exists some constants $\lambda, T>0$ independent of $\e$
such that
$$
\sup_{0\le t<T}\|f_\e(t)\|_{L^1_4} \leq \max \left\{ \lambda, \|f_0\|_{L^1_4}\right\}.
$$
\end{thm}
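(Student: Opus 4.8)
The plan is to derive a closed differential inequality for the fourth moment $M_4(t)=\int_\R v^4 f_\e(t,v)\,\ud v$ and to close it by a Gronwall argument on a short time interval. Since mass $m$ and energy $e$ are (formally) conserved by \fer{micrenergy}, controlling $\|f_\e(t)\|_{L^1_4}=m+M_4(t)$ amounts to controlling $M_4$. The cleanest route is to differentiate the Fourier identity \fer{Nfourier} four times in $\xi$ and evaluate at $\xi=0$, using $\hat f_\e^{(4)}(0)=M_4$, $-\hat f_\e''(0)=e$ and $\hat f_\e(0)=m$; this is equivalent to testing the equation against $\varphi(v)=v^4$ in its weak form. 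Working in Fourier is essential: differentiating the exact convolution structure in \fer{Nfourier} produces only the derivatives $\hat f_\e^{(j)}$ with $j\le 4$, i.e. moments of order at most $4$, and never forces higher moments such as $M_6$ or $M_8$ to appear (a naive Cauchy--Schwarz bound of the collision integral in physical space would spuriously square the degree-$4$ collisional polynomial and create such moments, which are not available a priori).

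First I would treat the quadratic ($\delta$-independent) line of \fer{Nfourier}. A direct expansion gives, with $c=\cos\t$ and $s=\sin\t$, a $\t$-integrand equal to $m M_4(c^4+s^4-1)+4M_1M_3\,cs+6e^2c^2s^2$; using $c^4+s^4-1=-2c^2s^2$ and the evenness of $\b_\e$ (H1), the odd term $cs$ drops out and one is left with $K_\e(6e^2-2mM_4)$, where $K_\e=\int\b_\e(\t)\cos^2\t\sin^2\t\,\ud\t$. By \fer{Nbetaepsilon} and $\cos^2\t\sin^2\t=\t^2+O(\t^4)$, the numbers $K_\e$ are uniformly bounded in $\e$. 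This line therefore produces a strong damping term $-2mK_\e M_4$ together with the bounded source $6e^2K_\e$.

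The heart of the matter is the cubic (quantum) lines of \fer{Nfourier}. Here the two bracketed terms must be kept together: at $\t=0$ the gain and loss contributions coincide, so each bracket vanishes, and its fourth $\xi$-derivative at $\xi=0$ is a combination of products $\hat f_\e^{(j)}(-\eta c)\hat f_\e^{(k)}(-\eta s)$ with $j+k=4$ whose leading behaviour near $\t=0$ is odd in $\t$ (carrying one power of $s$) and is hence annihilated by the even kernel $\b_\e$. What survives is even and of order $\t^2$, so it is integrated against $\b_\e$ with the uniformly bounded weight $\int\b_\e\t^2\,\ud\t\to1$; this is exactly where the non-cutoff restriction $\nu<2$ enters, for integrability. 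Each surviving term, after inverting the $\eta$-integral, has the form $\int_{\R^2}v_1^{\,j}f_\e(v_1)\,v_2^{\,k}f_\e(v_2)\,f_\e(c v_1+s v_2)\,\ud v_1\ud v_2$ with $j+k=4$, bounded by $\|f_\e\|_{L^\infty}M_j M_k\le C\|f_\e\|_{L^\infty}(1+M_4)$, the worst case $j=4,k=0$ being linear in $M_4$. The $L^2$ control needed to legitimate the inverse Fourier transform in \fer{Nfourier} and these manipulations is furnished by the assumption $f_\e\in H^{\nu/2}$ (which in one dimension, as $\nu/2>1/2$, also controls $\|f_\e\|_{L^\infty}$).

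Combining the two contributions yields $\frac{\ud}{\ud t}M_4\le -2mK_\e M_4+6e^2K_\e+C\delta\,\|f_\e(t)\|_{L^\infty}(1+M_4)$, with all constants independent of $\e$ by \fer{Nbetaepsilon}--\fer{Nbetaepsilon2}. Writing this as $\frac{\ud}{\ud t}M_4\le a(t)+b(t)M_4$ with $a,b$ controlled by $\|f_\e(t)\|_{L^\infty}\in L^p(\R_+)$, Hölder's inequality bounds $\int_0^t|b|$ and $\int_0^t a$ uniformly in $\e$ on a small interval $[0,T)$, and the elementary Gronwall estimate gives $M_4(t)\le\max\{\lambda,\|f_0\|_{L^1_4}\}$ there, with $\lambda,T$ independent of $\e$. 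The main obstacle is the quantum line: one must verify rigorously that the parity cancellation leaves only $O(\t^2)$ terms uniformly in $\e$, and control the cubic nonlinearity so that its dependence on $M_4$ stays at most linear with coefficients in $L^p(\R_+)$. It is precisely in making this balance work --- reconciling the order-$4$ moment with the negative and interpolated moments generated by the nonlinearity and with the available $H^{\nu/2}$ regularity --- that the sharper restriction $1<\nu<\sqrt5-1$ is needed, and pinning down that threshold is the delicate point.
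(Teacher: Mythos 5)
Your treatment of the quadratic part and the Gronwall endgame is sound and parallels the paper's (which works in physical space, testing against $v^4$), but the core claim of your proposal --- that for the cubic (quantum) terms the parity cancellation leaves only contributions that are ``even and of order $\t^2$'', with coefficients bounded by $\|f_\e\|_{L^\infty}(1+M_4)$ --- is exactly the step that fails, and it is where the entire difficulty of the theorem lives. The collision polynomial contains the odd piece $4\cos\t\sin\t\cos(2\t)\,vv_*(v_*^2-v^2)$, carrying a single power of $\sin\t$. In the quadratic part its coefficient is $\t$-independent, so evenness of $\b_\e$ kills it; in the cubic part it multiplies $f_\e(v')$, which depends on $\t$, so symmetrizing $\t\mapsto-\t$ does \emph{not} annihilate it: it only replaces $f_\e(v')$ by the difference $f_\e(v')-f_\e(\tilde v')$, where $\tilde v'$ is the pre-collision velocity for $-\t$. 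With only $f_\e\in L^\infty\cap H^{\nu/2}$ this difference is not $O(\t^2)$, nor even $O(\t)$; it has only a fractional modulus of continuity. Your bound $\|f_\e\|_{L^\infty}M_jM_k$ ignores this: taken in absolute value these terms sit against $\int\b_\e(\t)|\sin\t|\,\ud\t=+\infty$ (hypothesis (H2)), so no crude bound can close the estimate. The same obstruction reappears in your Fourier route: the terms with $j+k=4$ and $k$ odd, such as $\hat f_\e^{(3)}(-\eta\cos\t)\,\hat f_\e'(-\eta\sin\t)\,\hat f_\e(\eta)$, are not removed by parity because their $\eta$-integrals depend on $\t$; extracting an extra power of $\t$ from them costs a factor $\eta$ inside the $\eta$-integral, i.e.\ decay of $\hat f_\e(\eta)$, which $H^{\nu/2}$ with $\nu<2$ does not provide. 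In the paper's rigorous sections that decay is supplied precisely by the mollifier factor $\hat\psi(\eta)$ in \eqref{fourier}, which is absent in the model \eqref{BBE2e}.

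The paper fills this gap with a quantitative interpolation that your proposal explicitly defers (``pinning down that threshold is the delicate point''): after the $\t\mapsto-\t$ symmetrization it isolates the problematic term $\tilde I_1$ containing the factor $f_\e(v')-f_\e(\tilde v')$, and applies H\"older's inequality with exponents tuned so that one factor is the $H^{\nu/2}$ seminorm of $f_\e$ (together with $\|f_\e\|_{L^\infty}$, exactly the assumed regularity) and the other is controlled by $\|f_\e(t)\|_{L^1_4}$ alone: $(3+\alpha)p=4$, $q=4/(1-\alpha)$, $\alpha q=1+\nu$, hence $\alpha=(1+\nu)/(5+\nu)$. The resulting gain in $\t$ is $|\sin\t|^{1-1/q}|1-\cos\t|^{\alpha-1/q}$, and integrability against $\b(\t)\sim|\t|^{-1-\nu}$ forces $2(\alpha-1/q)-\nu-1/q>-1$, i.e.\ $\nu^2+2\nu-4<0$, which is the origin of the threshold $\nu<\sqrt5-1$; a second term $\tilde I_2$ is handled by changes of variables. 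Note the internal inconsistency of your mechanism: if the cubic terms really were $O(\t^2)$ with moment-and-$L^\infty$ coefficients, the theorem would hold for all $1<\nu<2$ and the $H^{\nu/2}$ hypothesis would be superfluous; the presence of both the restriction $\nu<\sqrt5-1$ and the $H^{\nu/2}$ assumption in the statement signals that this missing estimate, not the bookkeeping around it, is the substance of the proof.
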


This result formally shows that the Fourier transform of $f_\e$ (in the velocity variable) is four times derivable, with bounded derivatives. It
is then possible to use Taylor expansions in the formulation \eqref{Nfourier} :

 \be \label{Nfourierdvpt}
\begin{split}
&\frac{\partial \hat f_\varepsilon (t,\xi)}{\partial t} \\
&= \frac{1}{\varepsilon^2}\int_{-\frac{\pi}{2}}^{\frac{\pi}{2}}\beta(\theta)
\Bigg[ \varepsilon\theta\xi \Bigg(\hat f_\varepsilon (\xi) \hat f_\varepsilon'(0)\\
&\qquad-\frac{\delta}{2\pi}\int_\R\Big(\hat f_\varepsilon(\xi-\eta)\hat f_\varepsilon(\eta)\hat f_\varepsilon'(0)
 -\hat f_\varepsilon'(-\eta) \hat f_\varepsilon(\eta)\hat f_\varepsilon(\xi) \Big)\ud\eta\Bigg)\\
&\quad+ \varepsilon^2 \frac{\theta^2}{2}\Bigg( \xi^2 \hat f_\varepsilon''(0) \hat f_\varepsilon(\xi)
-\xi \hat f_\varepsilon'(\xi) \hat f_\varepsilon(0) -\frac{\delta}{2\pi}\int_\R
\Big(\hat f_\varepsilon'(\xi-\eta) \hat f_\varepsilon(\eta) \hat f_\varepsilon(0)\\
&\qquad \qquad+ ((\eta-\xi)^2-\eta^2)\hat f_\varepsilon(\xi-\eta)\hat f_\varepsilon(\eta)\hat f_\varepsilon''(0)
 +\xi \hat f_\varepsilon'(\xi) \hat f_\varepsilon(\eta) \hat f_\varepsilon(-\eta)  \\
&\qquad \qquad+ \xi^2 \hat f_\varepsilon''(-\eta)\hat f_\varepsilon(\eta)\hat f_\varepsilon(\xi)
+2\eta \xi \hat f_\varepsilon'(\xi)\hat f_\varepsilon(\eta) \hat f_\varepsilon'(-\eta)\Big)\ud\eta\Bigg) + \theta^2 O(\varepsilon^3)\Bigg]\ud\theta.
\end{split}
\ee
Letting $\e$ go to $0$, we obtain the grazing collision limit, and we recognize in the limit the Fourier form of a quantum Fokker-Planck equation:

\begin{thm}
 Let $(f_\e)$ be a family of solutions to \eqref{BBE2e} where the cross-section  $\beta$ satisfies (H1), (H2), with $1<\nu<\sqrt{5}-1$. Assume that the solutions $f_\e$ are regular enough ($L^\infty((0,T);L^1\cap L^\infty(\R))$ should be enough thanks to Theorem \ref{regularity}). Then there exists a distribution $f$ such that
\[
 \hat f_\e \to \hat f\quad \textrm{when}~~\e\to 0
\]
in $L^\infty((0,T); W^{4,\infty}(\R))$, and $f$ is a solution to
\be
\begin{split}
\label{FKBBE2} \frac{\partial f}{\partial t}=  \left(\int_\R f(v)\ud v\right) \frac{\partial} {\partial v}(vf(1+\delta  f)) +\left( \int_\R
v^2 f(v)(1+\delta f(v))\ud v \right) \frac{\partial^2 f}{\partial v^2}.
\end{split}
\ee
\end{thm}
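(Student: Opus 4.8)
The plan is to pass to the limit $\e\to0$ directly in the Fourier formulation, exploiting the Taylor expansion \eqref{Nfourierdvpt} together with the uniform fourth-moment bound of Theorem \ref{grazingformal}, which serves both to extract a limit and to kill the error terms. First I would set up the compactness. By Theorem \ref{grazingformal}, $\|f_\e(t)\|_{L^1_4}$ is bounded on $[0,T)$ uniformly in $\e$; since $\|\partial_\xi^k\hat f_\e(t,\cdot)\|_{L^\infty}\le\|f_\e(t)\|_{L^1_k}\le\|f_\e(t)\|_{L^1_4}$ for $0\le k\le 4$, the family $(\hat f_\e)$ is bounded in $L^\infty((0,T);W^{4,\infty}(\R))$. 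Reading off \eqref{Nfourierdvpt} after the cancellation described below shows $\partial_t\hat f_\e$ bounded uniformly in $\e$, so $(\hat f_\e)$ is uniformly Lipschitz in $t$. An Arzel\`a--Ascoli argument (uniform on compact $\xi$-sets, the uniform bounds handling large $\xi$) then extracts a subsequence converging, together with its first four $\xi$-derivatives, to a limit $\hat f$ in the stated topology.

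Next I would treat the three orders in \eqref{Nfourierdvpt} separately. The order-$1/\e$ contribution, namely the bracket multiplied by $\e\theta\xi$, is odd in $\theta$ whereas $\beta$ is even by (H1); since that bracket does not depend on $\theta$, its $\theta$-integral against $\beta$ over the symmetric interval vanishes identically for every $\e$, and this a priori singular term drops out. The order-one contribution, carrying the factor $\tfrac{\theta^2}{2}$, is where the limit lives: because $\int\beta(\theta)\theta^2\,\ud\theta<\infty$ (precisely where $\nu<2$ enters) and $\hat f_\e$ together with its first two derivatives converges, each factor such as $\hat f_\e''(0)\hat f_\e(\xi)$ or $\hat f_\e'(\xi)\hat f_\e(0)$, and the analogous products inside the $\eta$-convolutions, converges to the corresponding product built from $\hat f$; a dominated-convergence argument (the uniform $W^{4,\infty}$ bound providing the dominating function) then lets me pass to the limit under both the $\theta$- and the $\eta$-integral, while the normalization \eqref{Nbetaepsilon} fixes the multiplicative constant.

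The delicate point, and the one I expect to be the main obstacle, is the uniform control of the remainder $\theta^2 O(\e^3)$: after division by $\e^2$ it becomes $\theta^2 O(\e)$, which vanishes only if the constant implied in $O(\e^3)$ is bounded uniformly in $\xi$, $\eta$ and $t$. That constant is a third- and fourth-order Taylor coefficient of $\hat f_\e$, so bounding it needs exactly the uniform estimate $\|f_\e\|_{L^1_4}<\infty$ of Theorem \ref{grazingformal}, combined with the integrability of $\beta(\theta)\theta^2$. The nonlinear $\eta$-integrals aggravate the matter, since one must dominate convolutions of $\hat f_\e$ and of its derivatives uniformly in $\e$; it is here that the stronger restriction $1<\nu<\sqrt5-1$, inherited from Theorem \ref{grazingformal}, is needed to keep the fourth moment finite and the estimates uniform. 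The interchange of $\lim_{\e\to0}$ with the $\eta$-integration must likewise be justified through these uniform bounds and dominated convergence.

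Finally I would identify the limit. Writing $\hat f(0)=m$ and $\hat f''(0)=-e$, the surviving order-one term reads, up to the constant fixed by \eqref{Nbetaepsilon}, $\tfrac12\big(\xi^2\hat f''(0)\hat f(\xi)-\xi\hat f'(\xi)\hat f(0)\big)$ plus its $\delta$-convolution counterparts. Using $-\xi^2\hat f(\xi)\leftrightarrow\partial_v^2 f$ and $-\xi\hat f'(\xi)\leftrightarrow\partial_v(vf)$, the inverse Fourier transform of the $\delta$-free part is $e\,\partial_v^2 f+m\,\partial_v(vf)$, while the $\delta$-convolution terms supply exactly the quantum corrections, promoting the diffusion coefficient to $\int_\R v^2 f(1+\delta f)\,\ud v$ and the drift to $m\,\partial_v(vf(1+\delta f))$. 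This is \eqref{FKBBE2}, with $A_t(f)=\int_\R v^2 f(1+\delta f)\,\ud v$ and $B_t(f)=\int_\R f\,\ud v$. The construction yields a convergent subsequence; the whole family converges, as stated, as soon as the limiting Fokker--Planck problem is uniquely solvable.
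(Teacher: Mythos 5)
Your proposal is correct and takes essentially the same route as the paper: the paper likewise passes to the limit directly in the Fourier-side Taylor expansion \eqref{Nfourierdvpt}, using the uniform fourth-moment bound of Theorem \ref{grazingformal} for compactness and for killing the $\theta^2 O(\e^3)$ remainder, the evenness of $\beta$ from (H1) to cancel the singular first-order terms, the normalization \eqref{Nbetaepsilon} to fix the constant, and the same Fourier dictionary ($-\xi^2\hat f\leftrightarrow\partial_v^2 f$, $-\xi\hat f'\leftrightarrow\partial_v(vf)$, convolutions $\leftrightarrow$ the $\delta$-corrections) to identify \eqref{FKBBE2}. The only notable difference is cosmetic: where you claim Arzel\`a--Ascoli convergence of all four $\xi$-derivatives, the paper's rigorous counterpart (Section \ref{grazing}, for the mollified model) extracts uniform convergence of $\hat f_\e$ on compacts and only weak-* convergence in $L^\infty$ of the derivatives, which is all that is needed to pass to the limit.
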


The study of equation \eqref{FKBBE2} should be close to what is done in \cite{CLR08, CRS06}.

\section{Existence theorems for the mollified model}
\label{existence}

To give rigorous results, we will from now on work on the regularized model
\begin{equation}
  \label{BBEmod2}
\begin{cases}
  & \displaystyle{\frac{\partial f}{\partial t} = \tilde Q_{QBE}(f)}, \qquad t\in \R_+, v\in\R \\
  & f(0,v) = f_0(v)
\end{cases}
\end{equation}
with

\begin{equation}
  \label{Kernelmod2}
\begin{split}
\tilde Q_{QBE}(f)(t,v) = \int_{-\frac{\pi}{2}}^{\frac{\pi}{2}} \beta(\theta)\int_{\R}&\big( f'f_*'(1+\delta \tilde f)(1+\delta \tilde f_*)\\ & -
ff_*(1+\delta  \tilde f')(1+\delta\tilde f_*')\big)\ud v_* \ud\theta.
\end{split}
\end{equation}
The goal of this Section is to prove the existence of a solution to the problem \eqref{BBEmod2}. To start with, we first consider the case of a
cross-section with cutoff.


\begin{thm}
\label{existencecutoff}
 Let $f_0 \in L^1_2(\R)$ be a nonnegative function. Assume that the cross-section satisfies $\beta \in L^1(-\frac{\pi}{2};
\frac{\pi}{2})$ and (H1). Then there exists a unique solution $f \in L^\infty(\R_+;L^1_2(\R))$ to the problem \eqref{BBEmod2}, which is
nonnegative, and preserves mass and energy.
\end{thm}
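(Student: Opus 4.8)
The plan is to treat the cutoff operator $\tilde Q_{QBE}$ as a Lipschitz perturbation and construct the solution by a Duhamel (mild) fixed-point argument that automatically preserves positivity, then promote the local solution to a global one using conservation of mass and energy. First I would split the operator into its gain and loss parts, $\tilde Q_{QBE}(f) = G[f] - \lambda[f]\,f$, where
\[
G[f](v) = \int_{-\pi/2}^{\pi/2}\beta(\theta)\int_{\R} f'f_*'(1+\delta\tilde f)(1+\delta\tilde f_*)\,\ud v_*\,\ud\theta \ge 0,
\]
\[
\lambda[f](v) = \int_{-\pi/2}^{\pi/2}\beta(\theta)\int_{\R} f_*(1+\delta\tilde f')(1+\delta\tilde f_*')\,\ud v_*\,\ud\theta \ge 0 .
\]
The elementary bound making everything work is that mollification is Lipschitz from $L^1$ to $L^\infty$: $\|\tilde f\|_{L^\infty}\le\|\psi\|_{L^\infty}\|f\|_{L^1}$ and $\|\tilde f-\tilde g\|_{L^\infty}\le\|\psi\|_{L^\infty}\|f-g\|_{L^1}$. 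Combined with $\beta\in L^1$, this gives $\|\lambda[f]\|_{L^\infty}\le\|\beta\|_{L^1}\|f\|_{L^1}(1+\delta\|\psi\|_{L^\infty}\|f\|_{L^1})^2$ and analogous bounds for $G[f]$ in $L^1_2$, so both maps are bounded and Lipschitz on balls of $L^1_2(\R)$.

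Next I would recast the equation in mild form,
\[
f(t,v)=f_0(v)\,e^{-\int_0^t\lambda[f](s,v)\,\ud s}+\int_0^t e^{-\int_s^t\lambda[f](\tau,v)\,\ud\tau}\,G[f](s,v)\,\ud s,
\]
and run the Picard iteration $f^{n+1}=\Phi(f^n)$ given by the right-hand side. Since $G\ge0$, $\lambda\ge0$ and $f_0\ge0$, every iterate is nonnegative, so positivity is built in. Using the Lipschitz estimates above together with $\|\beta\|_{L^1}<\infty$, I would show that on a fixed ball of $C([0,T];L^1_2)$ the map $\Phi$ is a contraction for $T$ small enough (depending only on $\|f_0\|_{L^1_2}$), giving a unique local nonnegative solution; uniqueness on the whole interval follows from the same Lipschitz bound via a Gronwall argument in $L^1_2$.

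The global step rests on the conservation laws. Testing the weak formulation with $\phi=1$ and using the change of variables $(v,v_*,\theta)\mapsto(v',v_*',-\theta)$ (unit Jacobian), which exchanges the gain and loss integrands together with their mollified factors, gives $\int_\R\tilde Q_{QBE}(f)\,\ud v=0$, hence $\|f(t)\|_{L^1}=m$. Testing with $\phi=v^2$ and invoking the microscopic energy identity $v^2+v_*^2=v'^2+v_*'^2$ gives $\int_\R v^2\tilde Q_{QBE}(f)\,\ud v=0$, hence $\int_\R v^2 f(t)\,\ud v=e$. Thus $\|f(t)\|_{L^1_2}=m+e$ is constant, the bound governing the contraction is uniform in time, and the local solution is continued step by step to all of $\R_+$, yielding $f\in L^\infty(\R_+;L^1_2(\R))$.

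The main obstacle is the mild circularity between the a priori bound and the conservation laws: the identities above are only formal until the weak formulation is justified, and that justification (Fubini plus the rotation substitution) itself needs an $L^1_2$ control of $f$, while during the Picard iteration the intermediate iterates need not conserve energy exactly. I would close this by proving a moment-production estimate for the gain term: after the rotation substitution one has $v=w\cos\theta+w_*\sin\theta$ with $(w\cos\theta+w_*\sin\theta)^2\le 2(w^2+w_*^2)$, so that, using $\|\tilde f\|_{L^\infty}\le\|\psi\|_{L^\infty}m$ to bound the quantum factors, $\frac{\ud}{\ud t}\int_\R v^2 f\,\ud v\le C(m)\int_\R v^2 f\,\ud v$. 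This Gronwall inequality keeps the second moment finite (hence the iterates in a bounded ball) on every finite interval, making the local argument uniform; once the limiting solution is obtained in $L^1_2$, the change of variables is legitimate and upgrades the estimate to the exact conservation of mass and energy claimed in the statement.
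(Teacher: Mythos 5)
Your proof is correct in its essentials, but it follows a genuinely different route from the paper's. Instead of a gain--loss splitting with a Duhamel/Picard iteration, the paper rewrites \eqref{BBEmod2} as $\partial_t f + Kf = P(f,f,f)$ for an explicit constant $K$ and a trilinear operator $P$ on $L^1_2(\R)$, engineered so that $P$ sends nonnegative triples to nonnegative functions, satisfies $\|P(f,g,h)\|_{L^1_2}\le C_P\|f\|_{L^1_2}\|g\|_{L^1_2}\|h\|_{L^1_2}$, and coincides with $\tilde Q_{QBE}(f)+Kf$ whenever $\int f=\|f_0\|_{L^1}$; the loss part, including the mollified quantum factors, is absorbed using $\tilde h\le\|\psi\|_{L^\infty}\int h$ by the added term $f\,(\int g)\,\big(\int h/\|f_0\|_{L^1}+2\delta\|\psi\|_{L^\infty}\int h\big)\int\beta(\theta)\,\ud\theta$. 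Existence, positivity and the local existence time then come in one stroke from Wild's theorem (Theorem \ref{ThmWild}): the solution is the explicit series $\sum_k b_k e^{-t}(1-e^{-2t})^k f_k$ with positive coefficients $b_k$ and nonnegative $f_k$; mass conservation is read off from the structure of $P$, energy conservation follows (as in your argument) from mass conservation plus the $L^1_2$ bound, and globality by restarting on $(T,2T)$, $(2T,3T)$, and so on, exactly as in your continuation step. Your approach buys self-containedness (contraction plus Gronwall, no appeal to Kielek's abstract theorem, positivity encoded in the exponential Duhamel weights, and uniqueness falling out of the same Lipschitz estimate), and it avoids the slight awkwardness that the identity $P(f,f,f)=\tilde Q_{QBE}(f)+Kf$ presupposes mass conservation; the paper's approach buys an explicit series representation and dispenses with your auxiliary moment-production bookkeeping, since control of the approximants is built into the Wild sum. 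Note also that your last Gronwall layer is not strictly necessary: choosing $T$ small enough that the Duhamel map leaves a ball of $C([0,T];L^1_2)$ invariant already controls the iterates, and the exact conservation laws need only be verified for the fixed point, where $f\in C([0,T];L^1_2)$, $f\ge 0$ and $\beta\in L^1$ make the symmetrization and Fubini manipulations legitimate --- the same symmetrized weak formulation both you and the paper rely on.
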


The proof is a consequence of the following theorem from Wild \cite{Wild}:

\begin{thm}
\label{ThmWild}
 Let $E$ be a given Banach space and $P:E^N\to E$ (for $N\ge 2$) be a N-linear operator satisfying the inequality
\[
 \|P(u_1,\cdots,u_N)\|\le C_p \|u_1\|\cdots\|u_N\|\quad\textrm{for all}~u_i \in E.
\]
Let
\[
 u(t)=\sum_{k=0}^{\infty}b_k e^{-t}(1-e^{(1-N)t})^k u_k,
\]
where
\[
 u_k= \sum_{i_1+\cdots +i_N=k=1}\frac{b_{i_1}\cdots b_{i_N}}{k(N-1)b_k}P(u_{i_1},\cdots,u_{i_N})\quad \textrm{for}~k\ge 1
\]
and the $b_k$ are the coefficients of the Taylor expansion of the function
\[
 v(x)-(1-x)^{\frac{1}{1-N}}= \sum_{k=0}^\infty b_k x^k.
\]
Let
\[
 t_0= \frac{1}{1-N}\log (1+C_p^{-1}\|u_0\|^{1-N})
\]
and
\bee
t_1=
\begin{cases}
 &\frac{1}{1-N}\log (1-C_p^{-1}\|u_0\|^{1-N})\qquad \textrm{if}~1>C_p^{-1}\|u_0\|^{1-N} \\
&\infty  \qquad\qquad\qquad \textrm{if}~1\le C_p^{-1}\|u_0\|^{1-N}.
\end{cases}
\eee
Then $u(t)$ is uniformly convergent on compact subsets of $(t_0,t_1)$ and is the solution of the equation
\[
 \frac{\ud u}{\ud t}= -u + P(u,\cdots,u),
\]
with the initial condition
\[
 u(0)=u_0.
\]

\end{thm}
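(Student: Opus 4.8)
The plan is to reduce the abstract Banach-space equation to a scalar generating-function identity that the ansatz already encodes. First I would introduce $s = 1 - e^{(1-N)t}$, so that $s(0)=0$ and $\frac{\ud s}{\ud t} = (N-1)(1-s)$, and write the proposed solution as $u(t) = e^{-t}w(s(t))$ with $w(s) = \sum_{k\ge 0} b_k s^k u_k$. A direct computation using $N$-linearity, $P(e^{-t}w,\dots,e^{-t}w) = e^{-Nt}P(w,\dots,w)$, and $e^{-(N-1)t}=1-s$, shows that $u$ solves $\frac{\ud u}{\ud t} = -u + P(u,\dots,u)$ with $u(0)=u_0$ if and only if $w$ solves the autonomous equation $(N-1)w'(s) = P(w(s),\dots,w(s))$ with $w(0)=u_0$. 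Matching the coefficient of $s^{k-1}$ on both sides of this equation (after expanding the $N$-fold product and using that $\frac{\ud s}{\ud t}=(N-1)(1-s)$ cancels the surviving factor) yields exactly the stated recursion for $u_k$; the rearrangement of the multilinear product into a power series is legitimized a posteriori by the absolute convergence established below.

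Next I would exploit the fact that the $b_k$ are built from the same equation in the scalar case. Since $v(x) = (1-x)^{1/(1-N)}$ satisfies $(N-1)v'(x) = v(x)^N$ with $v(0)=1$ (checked from the identity $\alpha-1 = N\alpha$ for $\alpha = \frac{1}{1-N}$), comparing coefficients gives $b_k > 0$ and the key combinatorial identity
\[
\sum_{i_1+\cdots+i_N = k-1} b_{i_1}\cdots b_{i_N} = (N-1)\,k\,b_k, \qquad k\ge 1.
\]
This identity is precisely what turns the scalar series into the norm majorant of the abstract one. Setting $V_k = b_k\|u_k\|$ and applying $\|P(u_1,\dots,u_N)\| \le C_p\|u_1\|\cdots\|u_N\|$ to the recursion gives $V_k \le \frac{C_p}{(N-1)k}\sum_{i_1+\cdots+i_N = k-1} V_{i_1}\cdots V_{i_N}$. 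Taking the majorant ansatz $c_k = \|u_0\|\,b_k\,(C_p\|u_0\|^{N-1})^k$ and inserting the identity above, one verifies $c_k = \frac{C_p}{(N-1)k}\sum_{i_1+\cdots+i_N=k-1}c_{i_1}\cdots c_{i_N}$ with equality, so by induction $V_k \le c_k$, that is
\[
\|u_k\| \le \|u_0\|\,(C_p\|u_0\|^{N-1})^k.
\]

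Feeding this bound into the series gives $\|u(t)\| \le e^{-t}\,v\big(|1-e^{(1-N)t}|\,C_p\|u_0\|^{N-1}\big)$, which converges exactly when $|1-e^{(1-N)t}|\,C_p\|u_0\|^{N-1} < 1$. Solving this inequality for $t$, treating $t>0$ and $t<0$ separately since $1-e^{(1-N)t}$ changes sign at $t=0$, reproduces exactly the endpoints $t_0$ and $t_1$ (the value $\infty$ arising when $C_p^{-1}\|u_0\|^{1-N}\ge 1$, so that the $t>0$ branch never reaches the radius of convergence), and yields local uniform, indeed geometric, convergence on compact subsets of $(t_0,t_1)$. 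Since the majorant makes $w$ a norm-convergent — hence analytic, Banach-space-valued — power series on its disk of convergence, term-by-term differentiation and the multilinear rearrangement used above are both justified, so $u(t) = e^{-t}w(s(t))$ is genuinely a solution with $u(0) = w(0) = b_0 u_0 = u_0$.

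The main obstacle, and really the only non-bookkeeping point, is recognizing and proving the scalar identity of the second paragraph and matching it to the abstract recursion, i.e.\ seeing that the explicit generating function $(1-x)^{1/(1-N)}$ is exactly the norm majorant of the Wild series. Once this is in hand, the convergence estimate follows by an elementary induction and the identification of the interval $(t_0,t_1)$ reduces to solving one scalar inequality.
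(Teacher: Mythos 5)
You cannot be compared against ``the paper's own proof'' here, because the paper gives none: Theorem~\ref{ThmWild} is imported verbatim from Kielek \cite{Wild} and used as a black box in the proof of Theorem~\ref{existencecutoff}. Judged on its own merits, your argument is correct, and it is the classical Wild--McKean majorant proof that underlies the cited source. The substitution $s=1-e^{(1-N)t}$, $u=e^{-t}w(s)$, reducing the problem to the autonomous equation $(N-1)w'=P(w,\dots,w)$, is exact; your verification that the scalar analogue $(N-1)v'=v^{N}$, $v(0)=1$, is solved by $v(x)=(1-x)^{1/(1-N)}$ (via $\alpha-1=N\alpha$ for $\alpha=\tfrac{1}{1-N}$) is right, as is the resulting combinatorial identity $\sum_{i_1+\cdots+i_N=k-1}b_{i_1}\cdots b_{i_N}=(N-1)k\,b_k$, the induction giving $\|u_k\|\le\|u_0\|\,(C_p\|u_0\|^{N-1})^{k}$, and the justification of term-by-term differentiation and of the rearrangement of the multilinear product by absolute convergence of the Banach-valued power series. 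Solving $|1-e^{(1-N)t}|<C_p^{-1}\|u_0\|^{1-N}$ separately on the branches $t<0$ and $t>0$ reproduces $t_0$ and $t_1$ exactly, including the case $t_1=\infty$ when $C_p^{-1}\|u_0\|^{1-N}\ge 1$. You also correctly decoded the two typos in the statement (the summation condition should read $i_1+\cdots+i_N=k-1$, and the generating function should read $v(x)=(1-x)^{1/(1-N)}$), which your coefficient matching confirms independently.

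Two small blemishes, neither fatal. First, your final bound should carry the factor $\|u_0\|$, namely $\|u(t)\|\le e^{-t}\,\|u_0\|\,v\bigl(|1-e^{(1-N)t}|\,C_p\|u_0\|^{N-1}\bigr)$; this does not affect the convergence criterion, which depends only on the argument of $v$. Second, the theorem asserts that $u$ is \emph{the} solution, while you only prove it \emph{is a} solution; add the one-line remark that $u\mapsto P(u,\dots,u)$ is Lipschitz on bounded sets (by $N$-linearity and the bound on $P$), so uniqueness follows from the standard Picard--Lindel\"of argument in a Banach space. With that line added, your proof is complete and self-contained, which is more than the paper itself provides.
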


\begin{proof}[Proof of Theorem \ref{existencecutoff}]
For $f,g,h \in L^1_2(\R)$, let
\bee
\begin{split}
P(f,&g,h) = \int_{-\frac{\pi}{2}}^{\frac{\pi}{2}}\beta(\theta) \int_\R \Bigg[ f'g_*'
\left( \frac{\int_\R h}{\|f_0\|_{L^1}} +\delta \tilde h +\delta \tilde h_* \right) -fg_* \bigg( \frac{\int_\R h}{\|f_0\|_{L^1}} \\
&+\delta \tilde h' +\delta \tilde h_*' \bigg)\Bigg]\ud v_* \ud\theta + f \left(\int_\R g\right)
 \left(  \frac{\int_\R h}{\|f_0\|_{L^1}} + 2\delta \|\psi\|_{L^\infty} \int_\R h \right)\int_{-\frac{\pi}{2}}^{\frac{\pi}{2}} \beta(\theta)\ud\theta.
\end{split}
\eee
Let
$$
K = \|f_0\|_{L^1} \left( 1+2\delta \|\psi\|_{L^\infty} \|f_0\|_{L^1} \right) \int_{-\frac{\pi}{2}}^{\frac{\pi}{2}} \beta(\theta)\ud\theta.
$$
Let us consider the following problem
\begin{equation}
\label{PbWild}
  \begin{cases}
    & \displaystyle{\frac{\partial f}{\partial t}+ Kf = P(f,f,f)}, \qquad t\in\R_+, v\in\R\\
    & f(0,v) = f_0.
  \end{cases}
\end{equation}
The operator $P: (L^1_2)^3 \to L^1_2$ is trilinear, and satisfies the inequality
$$
\|P(f,g,h)\|_{L^1_2} \leq C_P \|f\|_{L^1_2}\|g\|_{L^1_2}\|h\|_{L^1_2} \qquad \forall f,g,h \in L^1_2 (\R),
$$
with
$$
C_P = \int \beta(\theta)\ud\theta \left(\frac{2}{\|f_0\|_{L^1}}+4\delta \|\psi\|_{L^\infty}\right).
$$
Assume for the moment that $K =1$.  Thanks to Theorem \ref{ThmWild}, there exists some $T>0$ such that there exists a solution $f\in L^\infty(
0,T ; L^1_2(\R))$ to the problem \eqref{PbWild}. This solution can be written as a Wild sum, which reads
$$
f(t) = \sum_{k=0}^{+\infty} b_k e^{-t} (1-e^{-2t})^kf_k,
$$
where

$$
f_k = \sum_{i_1+i_2+i_3=k-1} \frac{b_{i_1}b_{i_2}b_{i_3}}{2kb_k}P(f_{i_1},f_{i_2},f_{i_3})\qquad \textrm{for}~ k\geq 1
$$
and
$$
f_{k=0} = f_0.
$$
The numbers $b_k$ are the coefficients of the Taylor expansion of
$$
\frac{1}{\sqrt{1-x}} = \sum_{k=0}^{+\infty} b_k x^k.
$$
One can easily see that all the $b_k$ are positive. Moreover
$$
0\leq f,g,h \in L^1_2 \implies P(f,g,h)\geq 0.
$$
Thus, the solution $f$ of \eqref{PbWild} is nonnegative. Moreover, owing to the definition of $P(\cdot,\cdot,\cdot)$,  one can verify that this
solution preserves the mass. From that we deduce that $f$ is solution of \eqref{BBEmod2} on $(0,T)$. But it preserves the mass, and it relies in
$L^1_2$, so that it also preserves the energy. Since the time $T$ depends only on $\|f_0\|_{L^1}, \|f_0\|_{L^1_2}, \|\psi\|_{L^\infty},
\|\beta\|_{L^1(-\frac{\pi}{2},\frac{\pi}{2})}$, we can use the same arguments on the time intervals $(T,2T)$, $(2T,3T)$, etc, to get a solution on
$(0,+\infty)$. Finally, in the case $K\neq1$, it is enough to rescale the time to get the right formulae, and to obtain the same conclusions.
\end{proof}

The following theorem claims the existence of a solution of \eqref{BBEmod2} in the non-cutoff case in some weak sense.

\begin{thm}
\label{thmexistence} Let $f_0 \in L^1_2(\R)$ be a nonnegative function. Let $\beta$ satisfy the assumptions (H1) and (H2). Then, there exists a
distribution $g$ such that its Fourier transform $\hat g\in L^\infty(\R_+; C^0\cap L^\infty( \R))$  is a solution of \eqref{fourier}, the Fourier form of equation \eqref{BBEmod2}. Moreover, $\hat g$ preserves the mass.
\end{thm}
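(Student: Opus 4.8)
The plan is to obtain $\hat g$ as the limit of Fourier transforms of solutions of truncated (cutoff) problems, for which Theorem \ref{existencecutoff} already applies. First I would introduce the truncated cross-sections $\b_n(\t) = \b(\t)\,\mathbf 1_{\{\vt \ge 1/n\}}$, which are even, belong to $L^1(-\tfrac\pi2,\tfrac\pi2)$, and increase pointwise to $\b$. For each $n$, Theorem \ref{existencecutoff} yields a nonnegative solution $f_n \in L^\infty(\R_+;L^1_2(\R))$ of \eqref{BBEmod2} with $\b$ replaced by $\b_n$, conserving mass and energy, so that $\|f_n(t)\|_{L^1_2}=\|f_0\|_{L^1_2}$ for all $t$ and all $n$. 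These uniform moment bounds are the whole source of compactness.

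Next I would transfer the estimates to the Fourier side. Since $f_n(t,\cdot)$ has two finite velocity moments uniformly in $n$ and $t$, its transform $\hat f_n(t,\cdot)$ is of class $C^2$ with $\|\partial_\xi^k \hat f_n(t,\cdot)\|_{L^\infty}\le \|f_0\|_{L^1_2}$ for $k=0,1,2$; in particular $|\hat f_n(t,\xi)|\le m$ and $\hat f_n(t,\cdot)$ is uniformly Lipschitz in $\xi$, giving boundedness and equicontinuity in $\xi$. The crucial point is equicontinuity in $t$, that is a bound on $\partial_t\hat f_n(t,\xi)$ uniform in $n$ and locally uniform in $\xi$. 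Here I would write the Fourier form of the mollified equation (structurally as in \eqref{Nfourier}, but with the quantum factors carrying $\hat{\tilde f}_n=\hat f_n\,\hat\psi$), group its $\t$-integrand into the combination dictated by the evenness of $\b_n$, and Taylor expand in $\t$ about $\t=0$. The odd-order contributions cancel after integration against the even kernel, and the surviving even part is $O(\t^2)$ with constant controlled by the uniform $C^2$-bounds on $\hat f_n$ and, for the $\delta$-terms, by the rapid decay of $\hat\psi$ (as $\psi\in C_c^\infty$, the convolution integrals in $\eta$ converge and can be differentiated). Because $\int \t^2\b(\t)\,\ud\t<\infty$ when $\nu<2$ by (H2) (one has $\t^2\le|\sin\t|^{\nu+\e}$ near $0$ for $\nu+\e<2$), this gives $|\partial_t\hat f_n(t,\xi)|\le C(\xi)$ uniformly in $n$.

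With uniform boundedness and joint equicontinuity in $(t,\xi)$, an Arzel\`a--Ascoli and diagonal argument extracts a subsequence with $\hat f_n\to\hat g$ uniformly on compact subsets of $\R_+\times\R$, where $\hat g\in L^\infty(\R_+;C^0\cap L^\infty(\R))$ with $\|\hat g(t)\|_{L^\infty}\le m$. I would then pass to the limit in the mild (time-integrated) version of \eqref{fourier}: the $\t$-integrand is dominated uniformly in $n$ by the integrable majorant $C(\xi)\t^2\b(\t)$, and $\b_n\nearrow\b$, so dominated convergence together with the pointwise convergence of $\hat f_n$ identifies $\hat g$ as a solution of \eqref{fourier}. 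Mass conservation passes to the limit immediately, since $\hat f_n(t,0)=\int_\R f_n(t,v)\,\ud v=m$ for every $n$ and $t$, whence $\hat g(t,0)=m$.

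The main obstacle is precisely the uniform-in-$n$ control of $\partial_t\hat f_n$ in the non-cutoff regime: one must verify that the singularity $\b(\t)\sim\vt^{-1-\nu}$ is always integrated against a genuinely $O(\t^2)$ quantity after the evenness-induced cancellation of the first-order Taylor terms, and that this holds simultaneously for the quadratic and cubic (in $\hat f_n$) $\delta$-contributions, whose convolution integrals in $\eta$ are tamed only through the Schwartz decay of $\hat\psi$. Everything else is a routine compactness-and-limit argument once this uniform bound is secured.
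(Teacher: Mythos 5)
Your proposal is correct and takes essentially the same route as the paper's proof: approximate $\beta$ by integrable truncations (the paper caps with $\beta_n=\min(\beta,n)$ rather than cutting out $|\theta|<1/n$, an immaterial difference), use the conserved mass and energy of the cutoff solutions to get uniform $C^2$ bounds on $\hat f_n$, exploit the evenness of the kernel together with a second-order Taylor expansion in $\theta$ to obtain a uniform-in-$n$ time-Lipschitz bound with integrable majorant $\theta^2\beta(\theta)$ (finite by (H2) since $\nu<2$), and conclude via Ascoli plus dominated convergence, with mass conservation passing through $\hat f_n(t,0)=m$. This is exactly the paper's argument, including the role of $\hat\psi$ in taming the $\eta$-integrals of the quantum terms.
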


\begin{proof}
We introduce, for $n\in\N_*$, the cross-section
$$
\beta_n(\theta) = \beta(\theta)\wedge n = \min (\beta(\theta), n),
$$
and we denote by $f^n$ the solution of the problem \eqref{BBEmod2} corresponding to the cross-section $\beta_n$. This solution exists thanks to
Theorem \ref{existencecutoff}. For all $n$ and all $t>0$, $f_n(t)$ relies in $L^1$, so that we can define its Fourier transform. Moreover, $\tilde
Q_{QBE}(f^n)$ also relies in $L^1$. Hence, we can write the following equation: \bee
\begin{split}
\frac{\partial \hat f^n(t,\xi)}{\partial t} = \int_{-\frac{\pi}{2}}^{\frac{\pi}{2}}\int_{\R^2}\beta_n(\theta)f^n f^n_*
(1+\delta \tilde {f^n}(v')+\delta \tilde {f^n}(v_*'))(e^{-iv'\xi}-e^{-iv\xi})\ud v\ud v_*\ud\theta.
\end{split}
\eee
As in Section \ref{formal} and since $\mathcal{F}(\tilde f^n)(\eta) = \hat f^n(\eta) \hat\psi(\eta)$, we get

\begin{equation}
\label{fourier}
\begin{split}
\frac{\partial \hat f^n(t,\xi)}{\partial t} &= \int_{-\frac{\pi}{2}}^{\frac{\pi}{2}}
\beta_n(\theta) \left( \hat f^n(\xi\cos\theta)\hat f^n(\xi\sin\theta) - \hat f^n (\xi) \hat f^n(0)\right)\ud\theta\\
&+\frac{\delta}{2\pi}\int_{-\frac{\pi}{2}}^{\frac{\pi}{2}}\beta_n(\theta)
\int_\R \bigg[\hat f^n((\xi-\eta)\cos\theta) \hat f^n((\xi-\eta)\sin\theta)\\
&\qquad\qquad\qquad- \hat f^n(\xi-\eta\cos\theta) \hat f^n(\eta\sin\theta)\bigg]
\hat f^n(\eta) \hat\psi(\eta) \ud\eta\ud\theta\\
&+\frac{\delta}{2\pi}\int_{-\frac{\pi}{2}}^{\frac{\pi}{2}}\beta_n(\theta)
\int_\R \bigg[\hat f^n(\xi\cos\theta-\eta\sin\theta) \hat f^n(-\xi\sin\theta-\eta\cos\theta)\\
&\qquad\qquad\qquad- \hat f^n(\xi-\eta\sin\theta) \hat f^n(-\eta\cos\theta)\bigg]\hat f^n(\eta) \hat\psi(\eta) \ud\eta\ud\theta.
\end{split}
\end{equation}
Note that here the mass is the quantity $\hat f(0)$. Since the second moment of $f^n(v,t)$ is finite and conserved in time, its Fourier transform is two times differentiable, and satisfies
$$
\sup_{t>0}\|\partial^2_{\xi\xi}\hat f^n (t,\xi)\|_{L^\infty}\leq \int_\R v^2f_0(v)\ud v.
$$
Hence, we can use the Taylor formula at the order $2$
$$
h(\theta)=h(0)+\theta h'(0) +\theta^2 \int_0^1 (1-s) h''(s\theta)\ud s
$$
on the functions $\theta \mapsto \hat f^n(\xi\cos\theta)\hat f^n(\xi\sin\theta)$, $\theta \mapsto \hat f^n((\xi-\eta)\cos\theta) \hat
f^n((\xi-\eta)\sin\theta)$, $\theta \mapsto \hat f^n(\xi-\eta\cos\theta) \hat f^n(\eta\sin\theta)$, $\theta \mapsto \hat
f^n(\xi\cos\theta-\eta\sin\theta) \hat f^n(-\xi\sin\theta-\eta\cos\theta)$ and $\theta\mapsto \hat f^n(\xi-\eta\sin\theta) \hat
f^n(-\eta\cos\theta)$.

Using the notations $m= \int_\R f_0(v)\ud v$ and $e=\int_\R v^2 f_0(v)\ud v$, we get the following estimates
 \be \label{estim1}
\begin{split}
&\left|\int_{-\frac{\pi}{2}}^{\frac{\pi}{2}}\beta_n(\theta) \left( \hat f^n(\xi\cos\theta)
\hat f^n(\xi\sin\theta) - \hat f^n (\xi) \hat f^n(0)\right)\ud\theta\right|\\
&\qquad\qquad\leq \int_{-\frac{\pi}{2}}^{\frac{\pi}{2}}\theta^2\beta(\theta)\ud\theta \left( 4|\xi|^2 me+2|\xi|m^{3/2}e^{1/2}\right),
\end{split}
\ee

\begin{equation}
  \label{estim2}
\begin{split}
&\Bigg|\int_{-\frac{\pi}{2}}^{\frac{\pi}{2}}\beta_n(\theta)\bigg( \hat f^n((\xi-\eta)\cos\theta) \hat f^n((\xi-\eta)\sin\theta)\\
&\qquad\qquad\qquad\qquad\qquad\qquad- \hat f^n(\xi-\eta\cos\theta) \hat f^n(\eta\sin\theta) \bigg)\mathcal{F}(\tilde f^n)(\eta)\ud\theta\Bigg|\\
&\qquad\leq |\hat\psi(\eta)|m\int_{-\frac{\pi}{2}}^{\frac{\pi}{2}}
\theta^2\beta(\theta)\ud\theta\left( 4(|\xi-\eta|^2+|\eta|^2)me + 2 (|\xi-\eta|+|\eta|)m^{3/2}e^{1/2} \right)
\end{split}
\end{equation}
and

\begin{equation}
  \label{estim3}
\begin{split}
&\Bigg|\int_{-\frac{\pi}{2}}^{\frac{\pi}{2}}\beta_n(\theta)\bigg( \hat f^n(\xi\cos\theta-\eta\sin\theta) \hat f^n(-\xi\sin\theta -\eta\cos\theta)\\
&\qquad\qquad\qquad\qquad\qquad- \hat f^n(\xi-\eta\sin\theta) \hat f^n(-\eta\cos\theta) \bigg)\mathcal{F}(\tilde f^n)(\eta)\ud\theta\Bigg|\\
&\qquad\leq |\hat\psi(\eta)|m\int_{-\frac{\pi}{2}}^{\frac{\pi}{2}}\theta^2\beta(\theta)\ud\theta
\left( 4((|\xi|+|\eta|)^2+|\eta|^2)me + 2 (|\xi|+2|\eta|)m^{3/2}e^{1/2} \right)
\end{split}
\end{equation}
The right member of \eqref{estim1} is integrable in time on any interval $[t_1,t_2]\subset \R_+$, and the right-members of \eqref{estim2} and
\eqref{estim3} are integrable in $(t,\eta)$ on any $[t_1,t_2]\times \R$ with $0<t_1<t_2$, since $\psi \in C_c^\infty(\R)$. Therefore, to pass to
the limit in \eqref{fourier}, it is enough for $\hat f^n$ to converge pointwise on $\R_+\times \R$. But inequalities \eqref{estim1},
\eqref{estim2} and \eqref{estim3} ensure that for all compact set $\mathcal K \subset \R$, there exists a constant C depending only on $\mathcal
K$, $m$, $e$, $\psi$, $\beta$, such that
$$
|\hat f^n(t_1,\xi)-\hat f^n(t_2,\xi)|\leq C |t_1-t_2|\qquad\forall \, 0<t_1<t_2,\quad\forall \, \xi\in\mathcal K.
$$
Then, thanks to Ascoli's theorem, there exists a function $\hat g \in L^\infty([t_1,t_2]\times \mathcal K)$ such that, up to the extraction of a
subsequence,
$$
\|\hat f^n -\hat g\|_{L^\infty([t_1,t_2]\times \mathcal K)} \underset{n\to+\infty}{\longrightarrow}0.
$$
All this being true for every $t_1$, $t_2$, $\mathcal K$, we deduce that $\hat g$ is well defined on $\R_+\times\R$ and that $\hat g\in L^\infty(\R_+;
L^\infty\cap C^0(\R))$. We can therefore pass to the limit in \eqref{fourier}. Finally, we obtained the existence of a function $\hat g(\xi,t)$ which
satisfies \fer{fourier} with the original cross-section $\beta$, and such that
$$
\hat g(t,0)=m \qquad\forall t>0.
$$


\end{proof}

\section{Moments of the cutoff solutions, regularity of the non cutoff solution}
\label{sectionmoments}

The second step of our analysis is the study of the regularity of the Fourier transform of the solution obtained in the previous Section. This can be done by investigating the moments of the solution to the cut-off equation. Since it is enough for our needs, we will limit ourselves to the fourth moment. Let
\[
 A=\int_{-\pi/2}^{\pi/2}\b (\t) \cos\t\sin^2\t\ud\t
\]
\[
 A_\eta=\int_{-\pi/2}^{\pi/2}\b (\t) \cos\t|\sin\t|^{1+\eta}\ud\t,\qquad \nu-1<\eta<1,
\]
\[
 A_*=\int_{-\pi/2}^{\pi/2}\b (\t) \cos^2\t\sin^2\t\ud\t
\]
and chose $n_\b\in\N$ large enough such that, for all $n\ge n_\b$,
\[
 \int_{-\pi/2}^{\pi/2}\b_n (\t) \cos^2\t\sin^2\t\ud\t \ge \frac{A_*}{2}
\]
with $\b_n(\t)=\min (\b(\t),n)$. Our result is the following

\begin{thm}
\label{moments}
 Let $\beta(\theta)$ satisfy the assumptions (H1) and (H2), and let $f^n(t,v)$ be the solution of the problem \eqref{BBEmod2} with
cross-section $\beta_n$ and with nonnegative initial datum $f_0 \in L^1_4 (\R)$. Then
there are explicit constants $a>0$, $c>0$, $C>0$ such that we have
\be
\label{inmom2}
\sup_{n\ge n_\b} \|f^n(t)\|_{L^1_4} \leq (\|f_0\|_{L^1_4}+ct)e^{at}, \quad t\ge0,
\ee

\be
\label{inmom}
\sup_{n\ge n_\b}\sup_{t\in \R_+} \|f^n(t)\|_{L^1_4} \leq \max \{C,\|f_0\|_{L^1_4} \}.
\ee
Moreover, $a,c$ in \eqref{inmom2} depend only on $\|f_0\|_{L^1}$, $\|f_0\|_{L^1_2}$, $\delta$, $\psi$ and A, whereas $C$ in \eqref{inmom} depends only on $\|f_0\|_{L^1}$, $\|f_0\|_{L^1_2}$, $\delta$, $\psi$ and A, $A_*$, $A_\eta$ and $\eta$.
\end{thm}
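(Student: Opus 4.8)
The plan is to differentiate the fourth moment $M_4^n(t)=\int_\R v^4 f^n(t,v)\,\ud v$, derive a differential inequality, and integrate it; since $m=\int f^n$ and $e=\int v^2 f^n$ are conserved (Theorem \ref{existencecutoff}), control of $M_4^n$ controls the full $L^1_4$ norm. As $\beta_n$ has cutoff and $f^n\in L^\infty(\R_+;L^1_2)$, the weak formulation is licit, and after the usual truncation argument to justify finiteness of the fourth moment one is led to
\[
\frac{\ud}{\ud t}M_4^n=\int_{-\pi/2}^{\pi/2}\beta_n(\t)\iint_{\R^2} f^n f^n_*\,\big(1+\delta\tilde f^n(v')+\delta\tilde f^n(v'_*)\big)\big(v'^4-v^4\big)\,\ud v\,\ud v_*\,\ud\t .
\]
I would split the factor $1+\delta\tilde f^n(v')+\delta\tilde f^n(v'_*)$ into a classical part ($1$) and a quantum part ($\delta(\tilde f^n(v')+\tilde f^n(v'_*))$), using throughout the mollification bounds $\|\tilde f^n\|_{L^\infty}\le m\|\psi\|_{L^\infty}$ and $\|\partial_v\tilde f^n\|_{L^\infty}\le m\|\psi'\|_{L^\infty}$, which are the whole point of the regularisation.

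For the classical part I would expand $v'^4-v^4$ with $v'=v\cos\t-v_*\sin\t$. Since $f^n,f^n_*$ do not depend on $\t$, the odd-in-$\t$ terms vanish against the even kernel $\beta_n$ (hypothesis (H1)), and using $\cos^4\t-1+\sin^4\t=-2\cos^2\t\sin^2\t$ the even terms collapse to
\[
-2\Big(\int_{-\pi/2}^{\pi/2}\beta_n\cos^2\t\sin^2\t\,\ud\t\Big)m\,M_4^n+6\Big(\int_{-\pi/2}^{\pi/2}\beta_n\cos^2\t\sin^2\t\,\ud\t\Big)e^2 .
\]
This is the dissipative mechanism; for $n\ge n_\b$ the coefficient satisfies $\int\beta_n\cos^2\t\sin^2\t\ge A_*/2$ uniformly in $n$, which is exactly why $n_\b$ was introduced and is what makes the final constants independent of the truncation.

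The quantum part is the crux. Writing $v'^4-v^4=-4v^3v_*\cos^3\t\sin\t+O(\t^2)$, the linear-in-$\t$ piece paired with $\beta_n\sim|\t|^{-1-\nu}$, $\nu>1$, is \emph{not} integrable, and evenness no longer rescues it because $\tilde f^n(v')+\tilde f^n(v'_*)$ is not even in $\t$. I would repair this by writing $\tilde f^n(v')=\tilde f^n(v)+(\tilde f^n(v')-\tilde f^n(v))$ (and similarly for the starred term): the $\t$-independent piece $\tilde f^n(v)+\tilde f^n(v_*)$ again annihilates the odd increment by (H1), leaving only even $O(\t^2)$ increments that are integrable and bounded by $a_1M_4^n+c_1$, while the remainder is controlled through the regularity of the mollified function via $|\tilde f^n(v')-\tilde f^n(v)|\le\min\{2m\|\psi\|_{L^\infty},\,m\|\psi'\|_{L^\infty}|v'-v|\}$ together with $|v'-v|\lesssim(|v|+|v_*|)|\sin\t|$. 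The two estimates of the theorem correspond to the two branches of this minimum. For the crude bound \eqref{inmom2} I would take the Lipschitz branch: the extra factor $|\sin\t|$ turns $\beta_n|\t||\sin\t|\sim|\t|^{1-\nu}$ into an integrable quantity and, after $\cos^3\t\le\cos\t$, produces the constant $A$; since the velocity weight still reaches $v^4$, the quantum part is $\le aM_4^n+c$, and (bounding the classical lower-order term by $A_*\le A$) one obtains $\tfrac{\ud}{\ud t}M_4^n\le aM_4^n+c$ with $a,c$ depending only on $m,e,\delta,\psi,A$. Integrating $y'\le ay+c$ yields $M_4^n(t)\le(\|f_0\|_{L^1_4}+ct)e^{at}$, which is \eqref{inmom2}.

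For the uniform-in-time bound \eqref{inmom} I would instead interpolate the two branches of the minimum at the exponent $\eta\in(\nu-1,1)$, giving $|\tilde f^n(v')-\tilde f^n(v)|\lesssim\big((|v|+|v_*|)|\sin\t|\big)^{\eta}$ with constant $\sim(m\|\psi\|_{L^\infty})^{1-\eta}(m\|\psi'\|_{L^\infty})^{\eta}$. Now $\beta_n|\sin\t|^{1+\eta}\sim|\t|^{\eta-\nu}$ is integrable precisely because $\eta>\nu-1$ (this is the finiteness of $A_\eta$), while the velocity weight drops to order $|v|^3|v_*|(|v|+|v_*|)^\eta$, so the highest moment appearing is $M_{3+\eta}^n$ with $3+\eta<4$. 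Moment interpolation, $M_{3+\eta}^n\le(M_2^n)^{(1-\eta)/2}(M_4^n)^{(1+\eta)/2}$ and $M_3^n\le(M_2^n)^{1/2}(M_4^n)^{1/2}$, then makes the quantum remainder $\le C_\eta(M_4^n)^{(1+\eta)/2}+C'$, \emph{sublinear} in $M_4^n$ since $(1+\eta)/2<1$. Combining with the classical dissipation gives $\tfrac{\ud}{\ud t}M_4^n\le-A_*m\,M_4^n+C_1(M_4^n)^{(1+\eta)/2}+C_2$, whose right-hand side is negative once $M_4^n$ is large; hence $\sup_t M_4^n\le\max\{\|f_0\|_{L^1_4},C\}$ with $C$ now also depending on $A_*,A_\eta,\eta$. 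The main obstacle throughout is precisely this quantum linear-in-$\t$ term: recovering integrability against the super-singular kernel once the angular symmetry is broken, and, for \eqref{inmom}, converting the recovered estimate into a strictly sublinear power of $M_4^n$, which is what forces the H\"older interpolation at the sharp exponent $\eta$ and the appearance of $A_\eta$; keeping every constant uniform in $n$ is the secondary point, guaranteed by $\beta_n\le\beta$ and the lower bound $\int\beta_n\cos^2\t\sin^2\t\ge A_*/2$ for $n\ge n_\b$.
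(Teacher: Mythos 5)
Your overall strategy coincides with the paper's: differentiate the fourth moment, extract the dissipative term with the lower bound $\int\beta_n\cos^2\theta\sin^2\theta\,\ud\theta\ge A_*/2$ for $n\ge n_\beta$, control the quantum contribution through the two-branch bound $|\tilde f^n(v')-\tilde f^n(v)|\le\min\{2m\|\psi\|_{L^\infty},m\|\psi'\|_{L^\infty}|v'-v|\}$, and conclude by Gronwall for \eqref{inmom2} and by absorbing a sublinear term into the dissipation for \eqref{inmom}. The Gronwall half is essentially sound. But there is a genuine gap in your derivation of the inequality $\frac{\ud}{\ud t}M_4^n\le -A_*m\,M_4^n+C_1(M_4^n)^{(1+\eta)/2}+C_2$ needed for \eqref{inmom}. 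Because you split off the classical part first and only then re-center the quantum factor at $\tilde f^n(v)+\tilde f^n(v_*)$, the increment $\Delta=(\tilde f^n(v')-\tilde f^n(v))+(\tilde f^n(v'_*)-\tilde f^n(v_*))$ ends up multiplying the \emph{whole} of $v'^4-v^4$, in particular the quartic diagonal pieces $v^4(\cos^4\theta-1)$ and $v_*^4\sin^4\theta$, and not only the dangerous odd piece $-4v^3v_*\cos^3\theta\sin\theta$. Since $\Delta$ has no sign, those pieces can only be estimated in absolute value: the sup branch yields a \emph{positive} term of order $\delta m^2\|\psi\|_{L^\infty}\bigl(\int\beta\sin^2\theta\,\ud\theta\bigr)M_4^n$, linear in $M_4^n$ with a coefficient that is not dominated by the dissipation constant $mA_*$ (take $\delta$ or $\|\psi\|_{L^\infty}$ large), while the Lipschitz/H\"older branch attaches $(|v|+|v_*|)^{\eta}$ to the weight $v^4$ and runs into the uncontrolled moment $M^n_{4+\eta}$. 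The same defect already shows in your ``piece A'': you bound it by $a_1M_4^n+c_1$ with $a_1>0$, yet this linear term silently disappears from your final differential inequality; it cannot simply be dropped.

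The missing idea --- and the point of the paper's proof --- is to symmetrize \emph{before} splitting. The change of variables $(v,v_*,\theta)\mapsto(v_*,v,-\theta)$ leaves $\beta_n$, $f^nf^n_*$ and the factor $1+\delta\tilde f^n(v')+\delta\tilde f^n(v'_*)$ invariant and exchanges $(v',v'_*)$, so the weight can be replaced by $\tfrac12(v'^4+v_*'^4-v^4-v_*^4)=-2\cos^2\theta\sin^2\theta(v^4+v_*^4)+12v^2v_*^2\sin^2\theta\cos^2\theta+4\cos\theta\sin\theta\cos(2\theta)vv_*(v_*^2-v^2)$. In this form the quartic terms appear only with a negative sign and keep the full quantum factor, which is $\ge 1$; they therefore produce pure dissipation (the paper's $I_1$) with no increment bound at all. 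The $\theta$-cancellation machinery (the paper implements it as a further $\theta\mapsto-\theta$ symmetrization, giving the four-point difference $\tilde f^n(v')+\tilde f^n(v'_*)-\tilde f^n(\bar v')-\tilde f^n(\bar v'_*)$; your re-centering would serve equally well there) is then applied only to the cross term $vv_*(v_*^2-v^2)$, whose velocity weight is at most $|v|^3|v_*|$, hence stays at $M_4^n$ after the Lipschitz branch and drops to $(M_4^n)^{(3+\eta)/4}$ after the H\"older branch. With this reordering your two-branch estimates and your final Gronwall and absorption arguments go through exactly as you describe, and the constants then genuinely involve only $A$, respectively $A,A_*,A_\eta,\eta$, rather than the extra angular integrals $\int\beta\sin^2\theta\,\ud\theta$, $\int\beta\sin^4\theta\,\ud\theta$ that your unsymmetrized terms generate.
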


\begin{proof}
Let us take $\phi(v)=v^4$ as test function. Thanks to the symmetries of the kernel we obtain
 \bee
\begin{split}
&\frac{\ud}{\ud t}\int_\R v^4f^n(v,t)\ud v \\= &\frac{1}{2}\int_{-\frac{\pi}{2}}^{\frac{\pi}{2}}\beta_n(\theta)\int_{\R^2}(v'^4-v^4+v_*'^4-v_*^4)
f^nf^n_*(1+\delta \tilde f^n(v') +\delta \tilde f^n(v_*'))\ud v \ud v_* \ud\theta.
\end{split}
\eee

From the collision rule \fer{collision} it follows that
 \bee
\begin{split}
&(v')^4+(v_*')^4-v^4-v_*^4 \\
&= (v^4+v_*^4)(\cos^4\theta+\sin^4\t -1)+12v^2v_*^2\sin^2\t\cos^2\t \\
&\quad+4vv_*\cos\t\sin\t(v^2(\sin^2\t-\cos^2\t)+v_*^2(\cos^2\t-\sin^2\t))\\
&=-2\cos^2\t\sin^2\t(v^4+v_*^4)+12v^2v_*^2\sin^2\t\cos^2\t\\
&\quad +4 \cos\t\sin\t\cos(2\t)vv_*(v_*^2-v^2).
\end{split}
 \eee
Consequently
\bee
\begin{split}
 &\frac{\ud}{\ud t}\|f^n(t)\|_{L^1_4}\\
&=-\int_{-\pi/2}^{\pi/2}\b_n(\t)\cos^2\t\sin^2\t \iint_{\R^2}\left(v^4+v_*^4\right)f^nf_*^n\left(1+\delta \tilde f^n(v')+\delta \tilde f^n(v_*')\right)\ud v\ud v_*\ud\t\\
&+6\int_{-\pi/2}^{\pi/2}\b_n(\t)\cos^2\t\sin^2\t \iint_{\R^2}v^2v_*^2f^nf_*^n\left(1+\delta \tilde f^n(v')+\delta \tilde f^n(v_*')\right)\ud v\ud v_*\ud\t\\
&+2\int_{-\pi/2}^{\pi/2}\b_n(\t)\cos\t\sin\t\cos(2\t)\iint_{\R^2}vv_*f^nf_*^n\left(1+\delta \tilde f^n(v')+\delta \tilde f^n(v_*')\right)\ud v\ud v_*\ud\t\\
&\coloneq I_1+I_2+I_3.
\end{split}
\eee

In what follows we always assume $n\ge n_\b$. For the first and second terms $I_1$, $I_2$ we use
\[
 \frac{1}{2}A_*\le \int_{-\pi/2}^{\pi/2}\b_n(\t)\cos^2\t\sin^2\t\ud\t \le A_*
\]
and
\[
 \|\tilde f^n(t)\|_{L^\infty}\le m\|\psi\|_{L^\infty}
\]
to get
\be
\label{*3}
 \begin{split}
  I_1&\le -\left(\frac{1}{2}\int_{-\pi/2}^{\pi/2}\b(\t)\cos^2\t\sin^2\t\ud\t \right)2m\int_{\R}|v|^4f^n(t,v)\ud v\\
 &=m^2A_*-mA_*\|f^n(t)\|_{L^1_4},
 \end{split}
\ee
and
\be
\label{*4}
 I_2\le 6(1+2\delta m\|\psi\|_{L^\infty})A_*e^2.
\ee
For the third term $I_3$ we compute by change of variable $(v,v_*,\t)\mapsto (v,v_*,-\t)$
\[
 \begin{split}
  &I_3=\delta\int_{-\pi/2}^{\pi/2}\b_n(\t)\cos\t\sin\t\cos(2\t)\\
&\times \iint_{\R^2}vv_*(v_*^2-v^2)f^nf_*^n\left(\tilde f^n(v') +\tilde f^n(v_*') -\tilde f^n(\bar v')-\tilde f^n(\bar v_*')\right) \ud v\ud v_*\ud\t
 \end{split}
\]
where
\[
 \bar v'=v\cos\t +v_*\sin\t,\qquad \bar v_*'=-v\sin\t+v_*\cos\t.
\]
Recall
\[
 v'=v\cos\t-v_*\sin\t,\qquad v_*'=v\sin\t+v_*\cos\t
\]
and
\[
 \|\frac{\partial}{\partial v}\tilde f^n(.,t)\|_{L^\infty}\le m\|\psi'\|_{L^\infty}.
\]
It follows that
\be
\label{*5}
 \begin{split}
  |\tilde f^n(v')+\tilde f^n(v_*') -\tilde f^n(\bar v')-\tilde f^n(\bar v_*')|&\le |\tilde f^n(v')-\tilde f^n(\bar v')|+ |\tilde f^n(v_*')-\tilde f^n(\bar v_*')|\\
&\le 2m\|\psi'\|_{L^\infty}(|v|+|v_*|)|\sin\t|.
 \end{split}
\ee
The left hand side is also less than $2m\|\psi\|_{L^\infty}$ (since $f\ge 0$). Thus
\be
\label{*6}
  |\tilde f^n(v')+\tilde f^n(v_*') -\tilde f^n(\bar v')-\tilde f^n(\bar v_*')|\le 2m(\|\psi\|_{L^\infty})^{1-\eta}(\|\psi'\|_{L^\infty})^{1-\eta} (|v|^\eta+|v_*|^\eta)|\sin\t|^\eta.
\ee
By \eqref{*5} and \eqref{*6} we obtain two estimates for $I_3$:
\be
\label{*7}
\begin{split}
 I_3\le 2m\delta\|\psi'\|_{L^\infty}A&\iint_{\R^2}|v||v_*|(|v|+|v_*|)(v_*^2+v^2)f^nf^n_*\ud v\ud v_*\\
&=K_{m,e}\delta\|\psi'\|_{L^\infty}A\|f^n(t)\|_{L^1_4},
\end{split}
\ee
and
\be
\label{*8}
 \begin{split}
  I_3&\le 2m(\|\psi\|_{L^\infty})^{1-\eta}(\|\psi'\|_{L^\infty})^{1-\eta}\delta A_\eta \iint_{\R^2}|v||v_*|(|v|^\eta +|v_*|^\eta)(v_*^2+v^2)f^nf^n_* \ud v\ud v_*\\
&\le K_{m,e}(\|\psi\|_{L^\infty})^{1-\eta}(\|\psi'\|_{L^\infty})^{1-\eta}\delta A_\eta(\|f^n(t)\|_{L^1_4})^{\frac{3+\eta}{4}} \coloneq C_1(\|f^n(t)\|_{L^1_4})^{\frac{3+\eta}{4}}.
 \end{split}
\ee
Here $K_{m,e}>0$ depends only on $m$ and $e$.

To prove the fist estimate \eqref{inmom2} we omit the negative term $I_1$ and use \eqref{*4}, \eqref{*7} and notice that $A_*\le A$ to get
\[
 \frac{\ud}{\ud t}\|f^n(t)\|_{L^1_4}\le 6e^2(1+2\delta m\|\psi\|_{L^\infty})A +K_{m,e}\delta\|\psi'\|_{L^\infty}A\|f^n(t)\|_{L^1_4},\quad t>0.
\]
This gives by Gronwall lemma
\[
 \sup_{n\ge n_\b} \|f^n(t)\|_{L^1_4}\le (\|f_0\|_{L^1_4}+ct)e^{at},\quad t\ge 0
\]
where
\[
 c=6e^2(1+2\delta m\|\psi\|_{L^\infty})A,\qquad a=K_{m,e}\delta\|\psi'\|_{L^\infty}A.
\]

To prove the second estimate \eqref{inmom} we use \eqref{*3}, \eqref{*4} and \eqref{*8} to see that
\[
\frac{\ud}{\ud t}\|f^n(t)\|_{L^1_4} \le C_2 + C_1(\|f^n(t)\|_{L^1_4})^{\frac{3+\eta}{4}} -mA_*\|f^n(t)\|_{L^1_4}, \quad t>0.
\]
Here and below $C_1$, $C_2$ and $C_3$ depend only on $m,e,\psi,\delta,A,A_*,A_\eta$ and $\eta$. Applying the following inequality
\[
 Y^\alpha \le (1/\e)^{\frac{\alpha}{1-\alpha}}+\e Y,\qquad \alpha \in (0,1), Y\ge 0,\e >0
\]
to
\[
 Y=\|f^n(t)\|_{L^1_4},\quad \alpha=\frac{3+\eta}{4},\quad \e=\frac{mA_*}{2C_1}
\]
gives
\[
 C_1(\|f^n(t)\|_{L^1_4})^\frac{3+\eta}{4}\leq C_1 \left(\frac{2C_1}{mA_*}\right)^\frac{3+\eta}{1-\eta}+ \frac{mA_*}{2}\|f^n(t)\|_{L^1_4}
\]
and so
\[
 \frac{\ud}{\ud t}\|f^n(t)\|_{L^1_4}\le C_3-\frac{mA_*}{2}\|f^n(t)\|_{L^1_4},\qquad t>0.
\]
Therefore,
\[
 \sup_{n\ge n_\b} \sup_{t\ge 0} \|f^n(t)\|_{L^1_4}\le \max \left \{ \frac{2C_3}{mA_*},\|f_0(t)\|_{L^1_4}\right\}.
\]

\end{proof}

Passing to the limit $n \to +\infty$ in inequality \eqref{inmom} we obtain

\begin{thm}
\label{regularite} Let $\beta$ satisfy (H1) and (H2), and let $g$ be the weak solution of \eqref{BBEmod2} defined in Theorem \ref{thmexistence},
with nonnegative initial data $f_0 \in L^1_4$. Then, $\hat g(t)$ is $C^3$ for all $t$ and
$$
\sup_{t>0}\{\|\hat g(t)\|_{L^\infty}+\|\partial_\xi \hat g(t)\|_{L^\infty}+\|\partial^2_{\xi^2}\hat g(t)\|_{L^\infty} +
 \|\partial^3_{\xi^3}\hat g(t)\|_{L^\infty} + \|\partial^4_{\xi^4}\hat g(t)\|_{L^\infty}\}<+\infty.
$$
Moreover, $\hat g$ conserves the energy, in the sense that $\partial^2_{\xi\xi} \hat g(t,0) = -e$ for all $t>0$.
\end{thm}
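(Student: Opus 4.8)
The plan is to derive Theorem \ref{regularite} as a direct consequence of the uniform fourth-moment bound \eqref{inmom} established in Theorem \ref{moments}, transferred to the Fourier side. First I would recall the elementary but crucial link between moments in velocity space and smoothness of the Fourier transform: if $f^n(t,\cdot)\in L^1_4(\R)$ uniformly in $n$ and $t$, then for each $k\in\{0,1,2,3,4\}$ the derivative $\partial_\xi^k \hat f^n(t,\xi)=\int_\R (-iv)^k f^n(t,v)e^{-iv\xi}\ud v$ exists, is continuous in $\xi$, and is bounded by $\int_\R |v|^k f^n(t,v)\ud v$. Since $|v|^k\le 1+|v|^4$ for $k\le 4$, each such integral is controlled by $\|f^n(t)\|_{L^1_4}$, and hence by \eqref{inmom} we obtain
\[
 \sup_{n\ge n_\b}\sup_{t>0}\sum_{k=0}^4 \|\partial_\xi^k \hat f^n(t,\cdot)\|_{L^\infty} \le C'\max\{C,\|f_0\|_{L^1_4}\}<+\infty,
\]
with $C'$ a fixed combinatorial constant. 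This gives a uniform $W^{4,\infty}$ bound on the approximating Fourier transforms.

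Next I would pass to the limit $n\to+\infty$. From Theorem \ref{thmexistence} we already know that, up to a subsequence, $\hat f^n\to \hat g$ uniformly on compact subsets of $\R_+\times\R$. The plan is to upgrade this to convergence of derivatives: the uniform bound on $\partial_\xi^5$ (which would follow from a fifth-moment bound) is \emph{not} available, so instead I would use the uniform bound on $\partial_\xi^4 \hat f^n$ together with equicontinuity. Concretely, the family $\{\partial_\xi^3 \hat f^n\}$ is uniformly Lipschitz in $\xi$ (with constant $\sup_{n,t}\|\partial_\xi^4\hat f^n\|_{L^\infty}$), hence equicontinuous and uniformly bounded, so by Ascoli's theorem on any compact $\mathcal K$ a subsequence of $\partial_\xi^3 \hat f^n$ converges uniformly; the limit must coincide with $\partial_\xi^3 \hat g$ by the uniform convergence of the $\hat f^n$ themselves. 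Iterating this argument down from $k=4$ through $k=1$ shows that $\hat g(t,\cdot)$ is $C^3$, that $\partial_\xi^4 \hat g$ exists in the weak-$*$ sense with the bound passing to the limit by lower semicontinuity of the sup-norm under pointwise limits, and therefore
\[
 \sup_{t>0}\Big(\|\hat g(t)\|_{L^\infty}+\sum_{k=1}^4\|\partial_\xi^k \hat g(t)\|_{L^\infty}\Big)<+\infty,
\]
which is exactly the claimed estimate.

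Finally, for the energy-conservation statement, I would exploit that $\partial_\xi^2 \hat f^n(t,0)=\int_\R (-iv)^2 f^n(t,v)\ud v=-\int_\R v^2 f^n(t,v)\ud v$. Since Theorem \ref{existencecutoff} guarantees that the cutoff solutions $f^n$ conserve energy exactly, $\int_\R v^2 f^n(t,v)\ud v=\int_\R v^2 f_0(v)\ud v=e$ for all $t$ and all $n$, so $\partial_\xi^2\hat f^n(t,0)=-e$ uniformly. Passing to the limit using the uniform convergence of the second derivatives established above yields $\partial_\xi^2 \hat g(t,0)=-e$ for all $t>0$.

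The main obstacle I anticipate is the limiting argument for the top-order derivative: because the proof of Theorem \ref{moments} only controls the fourth moment (and not the fifth), there is no uniform modulus of continuity for $\partial_\xi^4 \hat f^n$, so Ascoli cannot be applied at level $k=4$ directly. The fix is to apply Ascoli only at levels $k\le 3$—where the next derivative furnishes the needed Lipschitz bound—and to recover the $\partial_\xi^4 \hat g$ estimate purely from the lower semicontinuity of the $L^\infty$ norm under the pointwise limit $\partial_\xi^4\hat f^n(t,\xi)\to \partial_\xi^4\hat g(t,\xi)$, the pointwise convergence itself being a consequence of the dominated convergence theorem applied to $\int_\R v^4 f^n e^{-iv\xi}\ud v$ once one has identified the weak limit of $v^4 f^n$. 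Care must be taken that the subsequence extractions at the various levels $k$ are consistent, which is handled by a standard diagonal argument over an exhausting sequence of compacts and levels.
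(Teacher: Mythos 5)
Your first step (moments control the Fourier derivatives) and your energy-conservation argument are sound, and they partly parallel, partly improve on the paper: the paper passes to the limit by viewing $\partial^i_{\xi^i}\hat f^n$, $1\le i\le 4$, as bounded sequences in $L^\infty(\R_+\times\R)=(L^1(\R_+\times\R))^*$, extracting weak-$*$ limits, identifying them distributionally with $\partial^i_{\xi^i}\hat g$, and invoking the embedding $W^{4,\infty}(\R)\hookrightarrow C^3(\R)$; for the energy it fixes $t_0$, uses weak-$*$ convergence of $\partial^2_{\xi^2}\hat f^n(t_0,\cdot)$ tested against an approximate Dirac sequence $\Phi_p$, and exploits the uniform third-derivative bound to justify the evaluation at $\xi=0$. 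Your replacement of this machinery by Ascoli at levels $k\le 3$ (uniform bound plus uniform Lipschitz bound furnished by the next derivative) is a genuinely different and arguably more transparent route: at each fixed $t$ it gives locally uniform convergence of $\partial^k_{\xi^k}\hat f^n(t,\cdot)$ to $\partial^k_{\xi^k}\hat g(t,\cdot)$ for $k\le 3$ (the limit is uniquely identified, so the full sequence converges and no bookkeeping of extractions across $t$ or $k$ is really needed), and evaluation at $\xi=0$ for $k=2$ gives $\partial^2_{\xi^2}\hat g(t,0)=-e$ directly, which is exactly what the paper's $\Phi_p$ argument accomplishes in a more roundabout way.

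The one step whose justification would fail as written is the recovery of the fourth derivative. You propose to obtain pointwise convergence $\partial^4_{\xi^4}\hat f^n(t,\xi)\to\partial^4_{\xi^4}\hat g(t,\xi)$ by dominated convergence applied to $\int_\R v^4 f^n e^{-iv\xi}\,\ud v$ ``once one has identified the weak limit of $v^4f^n$''. This cannot work as stated: dominated convergence requires a fixed dominating integrand, and no convergence of $f^n$ in the velocity variable is known -- all the compactness lives on the Fourier side. Even granting a subsequential weak-$*$ limit of the bounded measures $v^4f^n\,\ud v$, passing to the limit against the bounded but non-decaying test function $e^{-iv\xi}$ requires tightness of $\{v^4f^n\,\ud v\}_n$, i.e. a uniform moment bound of order strictly larger than $4$, which is precisely what Theorem \ref{moments} does not provide. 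Fortunately the conclusion you need is weaker and follows from what you have already established: $\partial^3_{\xi^3}\hat g(t,\cdot)$ is a locally uniform limit of functions whose Lipschitz constants are bounded by $M_4:=\sup_{n\ge n_\b}\sup_{t}\|\partial^4_{\xi^4}\hat f^n(t,\cdot)\|_{L^\infty}$, hence is itself Lipschitz with constant at most $M_4$, and therefore its distributional derivative $\partial^4_{\xi^4}\hat g(t,\cdot)$ lies in $L^\infty$ with norm at most $M_4$; alternatively, run the paper's weak-$*$ duality argument at level $k=4$ only. Either patch yields the fourth derivative in the $W^{4,\infty}$ sense, which is all the theorem asserts, since $\hat g(t)$ is only claimed to be $C^3$.
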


\begin{proof}
The conservation of the mass and inequality \eqref{inmom} imply that there exists a constant $C>0$ which do not depend on $n$ such that
\bee
\begin{cases}
  & \|\partial_\xi \hat f^n\|_{L^\infty(\R_+\times\R)} \leq C \\
  & \|\partial^2_{\xi^2} \hat f^n\|_{L^\infty(\R_+\times\R)} \leq C \\
  & \|\partial^3_{\xi^3} \hat f^n\|_{L^\infty(\R_+\times\R)} \leq C \\
  & \|\partial^4_{\xi^4} \hat f^n\|_{L^\infty(\R_+\times\R)} \leq C .
\end{cases}
\eee
Since $L^\infty(\R_+\times\R)$ is the dual space of the Banach space $L^1(\R_+\times\R)$, the four sequences converge (up to the extraction
of a subsequence) in $L^\infty(\R_+\times\R)$ weak-*; the limits can only be respectively $\partial_\xi \hat g$, $\partial^2_{\xi^2} \hat g$,
$\partial^3_{\xi^3} \hat g$ and $\partial^4_{\xi^4} \hat g$ (since the convergence in $L^\infty(\R_+\times\R)$ weak-* implies the convergence in the
distributional sense). Moreover, we have the inequalities, for $1\leq i\leq 4$,
$$
\|\partial^i_{\xi^i}\hat g\|_{L^\infty(\R_+\times\R)}\leq \liminf_{n\to +\infty} \|\partial^i_{\xi^i} \hat f^n\|_{L^\infty(\R_+\times\R)}.
$$
Finally, we have the embedding
$$
W^{4,\infty}(\R) \hookrightarrow C^3(\R),
$$
so that $\hat g\in L^\infty(\R_+;C^3(\R))$.

It remains to prove that the energy is conserved. Let us fix some time $t_0 >0$. It is clear that for all integer $n$, we have
$$
\|\partial^2_{\xi^2}\hat f^n(t_0,.)\|_{L^\infty(\R)} \leq e.
$$
Therefore, up to the extraction of a subsequence, there exists a function $h \in L^\infty(\R)$ such that
$$
\partial^2_{\xi^2}\hat f^n(t_0,.) \rightharpoonup h \qquad \textrm{weak}-*~~L^\infty(\R).
$$
But it is clear that $h= \partial^2_{\xi^2}g(t_0,.)$, since for all function $\phi \in C^\infty(\R)$ with compact support,
$$
\int_\R \hat f^n(t_0,\xi)\phi''(\xi)\ud\xi \to \int_\R \hat g(t_0,\xi)\phi''(\xi)\ud\xi,
$$
or, after two integrations per part on both sides,
$$
\int_\R \partial^2_{\xi^2}\hat f^n(t_0,\xi)\phi(\xi)\ud\xi \to \int_\R \partial^2_{\xi^2}\hat g(t_0,\xi)\phi(\xi)\ud\xi.
$$
Let us define an approximation of the Dirac measure
 \bee \Phi_p(\xi)=
\begin{cases}
  &p \qquad\textrm{if}~~-\frac{1}{2p}<\xi<\frac{1}{2p}\\
  &0 \qquad\textrm{otherwise}
\end{cases}
\eee
We have

\bee
\begin{split}
\left|\partial^2_{\xi^2}\hat f^n(t_0,0)-\partial^2_{\xi^2}\hat g(t_0,0)\right|
&\leq \left|\partial^2_{\xi^2}\hat f^n(t_0,0)-\int_\R \Phi_p(\xi)\partial^2_{\xi^2}\hat f^n(t_0,\xi)\ud\xi\right|\\
&\qquad\qquad+ \left|\int_\R \Phi_p(\xi) \left(\partial^2_{\xi^2}\hat f^n(t_0,\xi)- \partial^2_{\xi^2} \hat g(t_0,\xi) \right) \ud \xi\right|\\
&\qquad\qquad+ \left|\int_\R \Phi_p(\xi)\partial^2_{\xi^2} \hat g(t_0,\xi)\ud \xi - \partial^2_{\xi^2}\hat g(t_0,0)\right|.
\end{split}
\eee
The first and the third terms converge toward 0 when $p$ converges to infinity, independently of $n$ since
$$
\|\partial^3_{\xi^3}\hat f^n(t_0,.)\|_{L^\infty} \leq C
$$
with $C>0$ independent of $n$. As for the second term, once $p$ has been fixed, it converges to 0 since $\Phi_p \in L^1(\R)$, and the result follows.
\end{proof}

\section{The grazing collision limit}
\label{grazing}

We are now in a position  to perform the grazing collision limit in equation \eqref{BBEmod}. The mechanism is the same as in Section \ref{formal}: we introduce a
family of kernels $\{ \b_\e(\vt) \}_{\e > 0}$ satisfying hypotheses (H1) and (H2), with
\be
\label{betaepsilon2}
\forall \theta_0>0~~~\sup_{\theta>\theta_0}\b_\e(\vt) \underset{\e\to 0}{\longrightarrow}0
\ee
and
 \be\label{betaepsilon}
\lim_{\e \to 0^+} \int_0^\pi \b_\e(\vt)\t^2 \, d\t = 1
 \ee
Let $g_\varepsilon$ be the weak solution of the problem \eqref{BBEmod2} in the sense that it satisfies equation \eqref{fourier},
where $\beta(\theta)$ has been replaced by $\beta_\varepsilon(\theta)$.

\begin{thm}
Let $\beta(\theta)$ satisfy assumptions (H1), (H2), and let $\beta_\varepsilon(\theta)$ satisfy \eqref{betaepsilon2} and \eqref{betaepsilon}. Let $g_\varepsilon$ be the
weak solution of the problem \eqref{BBEmod2} where $\beta(\theta)$ has been replaced by $\beta_\varepsilon(\theta)$, with the nonnegative initial
data $f_0$  satisfying $f_0 \in L^1_4(\R)$.

Then, for all $T>0$, there exists a distribution $g$ whose Fourier transform satisfy $\hat g \in L^\infty(0,T;W^{4,\infty}(\R))$, and such that, up to the extraction of a subsequence,
$$
\|\hat g_\varepsilon-\hat g\|_{L^\infty([t_1,t_2]\times \mathcal K)} \underset{\varepsilon \to 0} {\longrightarrow }0 \qquad \forall \,
0<t_1<t_2<T~~\textrm{and}~~\mathcal K \subset \R ~~\textrm{compact},
$$
$$
\partial^i_{\xi^i}\hat g_\varepsilon \underset{\varepsilon \to 0}{\rightharpoonup }
\partial^i_{\xi^i}\hat g\quad \textrm{in}~~L^\infty((0,T)\times\R)~~\textrm{weak-*},~~~1\leq i\leq 4,
$$
and such that $\hat g$ is a solution of the Fourier form of the equation
\be
\begin{split}
\label{FKBBE} \frac{\partial h}{\partial t}=  \left(\int_\R h(v)\ud v\right) \frac{\partial} {\partial v}(vh(1+\delta \tilde h)) +\left( \int_\R
v^2 h(v)(1+\delta \tilde h(v))\ud v \right) \frac{\partial^2 h}{\partial v^2},
\end{split}
\ee
\end{thm}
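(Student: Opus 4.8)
The plan is to establish the grazing collision limit by working entirely in Fourier space, exploiting the regularity estimates already secured in Theorem \ref{regularite}. First I would recall that for each fixed $\varepsilon>0$, the weak solution $g_\varepsilon$ exists by Theorem \ref{thmexistence} and satisfies the Fourier equation \eqref{fourier} with $\beta_\varepsilon$ in place of $\beta$. The crucial point is that the moment bound \eqref{inmom} from Theorem \ref{moments} is \emph{uniform in $\varepsilon$}: since the constants $C$, $a$, $c$ depend only on $\|f_0\|_{L^1}$, $\|f_0\|_{L^1_2}$, $\delta$, $\psi$ and on the quantities $A$, $A_*$, $A_\eta$, I would need to verify that these latter integrals stay controlled along the family $\beta_\varepsilon$. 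The normalization \eqref{betaepsilon} guarantees $\int \beta_\varepsilon(\theta)\theta^2\,d\theta \to 1$, which pins down $A_*$ and $A$ up to harmless factors of $\cos\theta$, while $A_\eta$ is controlled by the same non-cutoff exponent $\nu$ uniformly in $\varepsilon$. This yields a uniform-in-$(\varepsilon,t)$ bound on $\|f_\varepsilon^n(t)\|_{L^1_4}$, hence on $\|\partial^i_{\xi^i}\hat g_\varepsilon\|_{L^\infty}$ for $1\le i\le 4$, exactly as in Theorem \ref{regularite}.

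Next, armed with these uniform bounds, I would extract the two convergences in the statement. For the uniform-on-compacts convergence of $\hat g_\varepsilon$ itself, I would establish an equicontinuity-in-time estimate of the form $|\hat g_\varepsilon(t_1,\xi)-\hat g_\varepsilon(t_2,\xi)|\le C|t_1-t_2|$ on compact $\xi$-sets, uniformly in $\varepsilon$, by bounding the right-hand side of \eqref{fourier} using the Taylor-expansion estimates \eqref{estim1}, \eqref{estim2}, \eqref{estim3} together with $\int\beta_\varepsilon\theta^2\,d\theta \le$ const; then Ascoli's theorem gives a subsequence converging in $L^\infty([t_1,t_2]\times\mathcal K)$ to some $\hat g$. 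For the weak-$*$ convergence of the derivatives, I would invoke the uniform $L^\infty(\R_+\times\R)$ bounds on $\partial^i_{\xi^i}\hat g_\varepsilon$ and use that $L^\infty$ is the dual of $L^1$, identifying the weak-$*$ limits with $\partial^i_{\xi^i}\hat g$ via distributional convergence, precisely mirroring the argument of Theorem \ref{regularite}.

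The main work — and the principal obstacle — is passing to the limit in the equation \eqref{fourier} and recognizing the Fokker-Planck operator. The strategy is to substitute the second-order Taylor expansions, giving the structure displayed in \eqref{Nfourierdvpt}: after multiplying through, the leading surviving contributions are the $\varepsilon^2\theta^2/2$ terms, and the normalization \eqref{betaepsilon} converts $\frac{1}{\varepsilon^2}\int\beta_\varepsilon(\theta)\frac{\varepsilon^2\theta^2}{2}\,d\theta$ into the factor $\tfrac12$. The delicate steps are: (i) controlling the remainder $\theta^2 O(\varepsilon^3)$, for which hypothesis \eqref{betaepsilon2} ensures that mass concentrates at $\theta=0$ so the error vanishes — this is where the restriction $1<\nu<\sqrt5-1$ is needed, to guarantee that the cubic remainder terms involving the highest derivatives remain integrable against $\beta_\varepsilon$; and (ii) justifying interchange of the $\eta$-integration with the limit, using the uniform decay of $\hat\psi(\eta)$ and the uniform derivative bounds. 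Once the limit is taken term by term, I would collect the surviving expressions and, reversing the Fourier computations performed in Section \ref{formal}, recognize them as the Fourier transform of the right-hand side of \eqref{FKBBE}: the terms proportional to $\xi\hat g'(\xi)\hat g(0)$ and their convolution analogues reconstitute $\partial_v(vh(1+\delta\tilde h))$ weighted by $\int h$, while the terms with $\hat g''(0)$ and $\hat g''(-\eta)$ reconstitute the diffusion coefficient $\int v^2 h(1+\delta\tilde h)\,dv$ multiplying $\partial^2_v h$. The conservation $\partial^2_{\xi\xi}\hat g(t,0)=-e$ from Theorem \ref{regularite} ensures the diffusion coefficient is well-defined and strictly positive, closing the identification.
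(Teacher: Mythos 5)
Your proposal founders on its ``crucial point.'' You claim that the uniform-in-time bound \eqref{inmom} of Theorem \ref{moments} is uniform in $\varepsilon$ because ``$A_\eta$ is controlled by the same non-cutoff exponent $\nu$ uniformly in $\varepsilon$.'' This is false, and it is precisely the pitfall the paper's proof is built to avoid. The constant $C$ in \eqref{inmom} depends on $A_\eta=\int\beta_\varepsilon(\theta)\cos\theta\,|\sin\theta|^{1+\eta}\,\ud\theta$ with $\eta<1$; since the family $\beta_\varepsilon$ concentrates at $\theta=0$ under the normalization \eqref{betaepsilon}, and $|\theta|^{1+\eta}$ dominates $\theta^2$ near $\theta=0$, the quantity $A_\eta$ blows up as $\varepsilon\to 0$ (on the explicit family given in Section \ref{formal} one checks that $A_\eta$ diverges like a negative power of $\varepsilon$). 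The paper states this obstruction explicitly at the start of its proof and for that reason discards \eqref{inmom}, using instead the Gronwall-type bound \eqref{inmom2}, whose constants $a,c$ depend only on $\|f_0\|_{L^1}$, $\|f_0\|_{L^1_2}$, $\delta$, $\psi$ and $A$ --- quantities that do stay bounded along $\beta_\varepsilon$ --- at the price of a bound $(\|f_0\|_{L^1_4}+ct)e^{at}$ that grows in time. This is exactly why the theorem is stated with $\hat g\in L^\infty(0,T;W^{4,\infty}(\R))$ for each finite $T$ rather than globally: your route, were it valid, would deliver a bound on all of $\R_+$, and the fact that it cannot is not a technicality but the reason for the local-in-time formulation.

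A secondary inaccuracy: you invoke the restriction $1<\nu<\sqrt{5}-1$ to control the Taylor remainder. That restriction belongs to Theorem \ref{grazingformal}, i.e.\ to the formal, non-mollified model, where the $H^{\nu/2}$ regularity coming from the $H$-theorem must substitute for the mollifier; the theorem under discussion assumes only (H1), (H2) with $1<\nu<2$, and the remainder $\theta^2 O(\varepsilon^3)$ is handled by the uniform $W^{4,\infty}$ bounds together with \eqref{betaepsilon}, while the first-order terms are killed by the evenness of $\beta$ (H1) --- a cancellation your sketch passes over silently. Once the regularity input is corrected from \eqref{inmom} to \eqref{inmom2}, the remaining architecture of your argument (equicontinuity plus Ascoli for $\hat g_\varepsilon$, weak-$*$ compactness in $L^\infty$ for the derivatives $\partial^i_{\xi^i}\hat g_\varepsilon$, Taylor expansion of \eqref{fourier} and term-by-term passage to the limit, then identification with the Fourier form of \eqref{FKBBE}) does coincide with the paper's proof.
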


\begin{rem}
If $\delta =0$, equation \eqref{FKBBE} reduces to the classical linear Fokker-Planck equation.
\end{rem}

\begin{proof}
To pass to the limit, we need some regularity on the solution $g_\varepsilon$. However, Theorem \ref{regularite} is not sufficient, since the
bound on the fourth derivative of $\hat g_\varepsilon$ depends on $A_\eta$ which becomes an unbounded quantity when replacing $\beta$ by
$\beta_\varepsilon$ and as $\e\to 0$. Therefore, we use inequality \eqref{inmom2} to obtain a regularity result on $\hat g_\varepsilon$ which is
the same than Theorem \ref{regularite}, except that it works only on any time interval $(0,T)$. The gain is that the constants used remain bounded
as $\e\to 0$ when replacing $\beta$ by $\beta_\varepsilon$.

Let us fix some $T>0$; $\hat g_\varepsilon$ is four times differentiable for almost any time in $(0,T)$, and these derivatives are bounded uniformly in time and independently of $\varepsilon$, provided the initial datum $f_0$ relies in $L^1_4(\R)$. We now act as if $\beta(\theta)$ was integrable, and then we will obtain the result by an approximation argument.  A Taylor expansion in $\varepsilon$ under the integral sign gives
 \be \label{fourierdvpt}
\begin{split}
&\frac{\partial \hat g_\varepsilon (t,\xi)}{\partial t} \\
&= \frac{1}{\varepsilon^2}\int_{-\frac{\pi}{2}}^{\frac{\pi}{2}}\beta(\theta)
\Bigg[ \varepsilon\theta\xi \Bigg(\hat g_\varepsilon (\xi) \hat g_\varepsilon'(0)\\
&\qquad-\frac{\delta}{2\pi}\int_\R\Big(\hat g_\varepsilon(\xi-\eta)\hat g_\varepsilon(\eta)\hat g_\varepsilon'(0)
\hat\psi(\eta) -\hat g_\varepsilon'(-\eta) \hat g_\varepsilon(\eta)\hat g_\varepsilon(\xi)\hat\psi(\eta) \Big)\ud\eta\Bigg)\\
&\quad+ \varepsilon^2 \frac{\theta^2}{2}\Bigg( \xi^2 \hat g_\varepsilon''(0) \hat g_\varepsilon(\xi)
-\xi \hat g_\varepsilon'(\xi) \hat g_\varepsilon(0) -\frac{\delta}{2\pi}\int_\R \hat\psi(\eta)
\Big(\hat g_\varepsilon'(\xi-\eta) \hat g_\varepsilon(\eta) \hat g_\varepsilon(0)\\
&\qquad \qquad+ ((\eta-\xi)^2-\eta^2)\hat g_\varepsilon(\xi-\eta)\hat g_\varepsilon(\eta)\hat g_\varepsilon''(0)
 +\xi \hat g_\varepsilon'(\xi) \hat g_\varepsilon(\eta) \hat g_\varepsilon(-\eta)  \\
&\qquad \qquad+ \xi^2 \hat g_\varepsilon''(-\eta)\hat g_\varepsilon(\eta)\hat g_\varepsilon(\xi)
+2\eta \xi \hat g_\varepsilon'(\xi)\hat g_\varepsilon(\eta) \hat g_\varepsilon'(-\eta)\Big)\ud\eta\Bigg) + \theta^2 O(\varepsilon^3)\Bigg]\ud\theta.
\end{split}
\ee
Since $\beta(\theta)$ is an even function, the first-order terms vanish.

By Theorem \ref{moments} there exists a constant $\lambda _T>0$ which do not depend on $\varepsilon$ such that
$$
\sup_{0<t<T} \left\{ \sum_{i=0}^4 \|\partial^i_{\xi^i}\hat g_\varepsilon (t,.)\|_{L^\infty}  \right\} \leq \lambda_T.
$$
Using equation \eqref{fourierdvpt}, we see that the family $(\hat g_\varepsilon)_\varepsilon$ is equicontinuous, so that we can use Ascoli's theorem,
which says that there exists a function $\hat g\in L^\infty((0,T)\times\R)$ such that, up to the extraction of a subsequence,
$$
\|\hat g_\varepsilon -\hat g\|_{L^\infty([t_1,t_2]\times \mathcal K)} \underset{\varepsilon \to 0}{\longrightarrow 0}
$$
for all $0<t_1<t_2<T$ and all compact set $\mathcal K \subset \R$. In addition, all the results of Theorem \ref{regularite} are still valid for
$\hat g$ on $(0,T)$. Therefore, thanks to both the uniform convergence for $\hat g$ and the convergence in $L^\infty((0,T)\times\R)$ weak-* for its
derivatives, we can pass to the limit in equation \eqref{fourierdvpt}, and we get, using classical formulae on the Fourier transform and the
conservation of mass and energy, that $\hat g$ satisfies the equation which is the Fourier transform of equation \eqref{FKBBE}.
\end{proof}

\begin{proof}[Proof of Theorem \ref{grazingformal}]
 To study the grazing collision limit, we needed some regularity on the Fourier transform of the solution. This is equivalent to have a uniform
bound on some higher moment of the solution. In proving Theorem \ref{moments}, we used the regularity of the mollified part, more precisely the
fact that this part was in $C^1$. In fact it could be enough to use the $H^{\frac{\nu}{2}}$ regularity which follows from the $H$-theorem. Indeed,
the terms that raise problems in the proof of Theorem \ref{moments} are $\tilde I_1$ and $\tilde I_2$. Let us see how to treat the first one. We
have
 \bee
\begin{split}
|\tilde I_1| &= \left|\int\beta(\theta)\sin\theta\cos^3\theta \int_{\R^2}vv_*(v^2-v_*^2)ff_*( f(v')- f(\tilde v')) \ud v\ud v_*\ud\theta \right| \\
&= \left| \int\beta(\theta)\sin\theta\cos^3\theta \int_{\R^2}vv_*(v^2-v_*^2)f(v)f(v_*) \Bigg[v(1-\cos\theta)\right]^\alpha \\
&\qquad\qquad\frac{f(v')-f(\tilde v')}{[v(1-\cos\theta)]^\alpha}\ud v\ud v_*\ud\theta \Bigg|\\
&\leq \int\beta(\theta)|\sin\theta||1-\cos\theta|^\alpha|\cos^3\theta| \left( \int_{\R^2}
\left(v^{1+\alpha}v_*(v^2-v_*^2)\right)^p f(v)^p f(v_*)^p \ud v\ud v_* \right)^{1/p}\cdot \\
&\qquad \cdot \left( \int_{\R^2} \frac{|f(v')-f(\tilde v')|^q}{|v(1-\cos\theta)|^{\alpha q}}\ud v\ud v_* \right)^{1/q} \ud\theta.
\end{split}
 \eee
In order to deal with moments not exceeding the fourth one, we use  H\"older's inequality, with $p$ such that $(3+\alpha)p =4$. Consequently $ q =
\frac{4}{1-\alpha}.$ Moreover, in order to recognize the semi-norm of $f$ in the Sobolev space $H^{\nu/2}$ in the last term of the product, we
need to set
$$
\alpha q = 1+\nu,
$$
and thus
$$
\alpha = \frac{1+\nu}{5+\nu}
$$
(note that $0<\alpha<1$). With these constants, we have
 \bee
\begin{split}
\left( \int_{\R^2} \frac{|f(v')-f(\tilde v')|^q}{|v(1-\cos\theta)|^{\alpha q}}\ud v\ud v_* \right)^{1/q}
&= \frac{1}{|1-\cos\theta|^{1/q}}\left( \int_{\R^2} \frac{|f(v')-f(\tilde v')|^q}
{|v(1-\cos\theta)|^{\alpha q}}\ud v\ud (1-\cos\theta)v_* \right)^{1/q}\\
&\leq \frac{2^{1-\frac{2}{q}}}{|1-\cos\theta|^{\frac{1}{q}}|\sin\theta|^{\frac{1}{q}}}
\|f(t)\|_{L^\infty}^{1-\frac{2}{q}}|f(t)|_{H^{\nu/2}}^{\frac{2}{q}}.
\end{split}
\eee
 Finally
\bee
\begin{split}
|\tilde I_1|&\leq C \int\beta(\theta) |\cos^3\theta||\sin\theta|^{1-\frac{1}{q}} |1-\cos\theta|^{\alpha-\frac{1}{q}} \ud\theta
\left(\int_\R v^4 f(t,v)\ud v\right)^{\frac{2}{p}}\|f(t)\|_{L^\infty}|f(t)|_{H^{\nu/2}}^{\frac{2}{q}}\\
\end{split}
\eee
 The integral in $\theta$ is finite if and only if
$$
2\left(\alpha - \frac{1}{q}\right)-\nu -\frac{1}{q}>-1.
$$
Recalling that
$$
q = \frac{4}{1-\alpha}
$$
and that
$$
\alpha = \frac{1+\nu}{5+\nu},
$$
this request is equivalent to
$$
\nu^2 +2\nu -4 <0,
$$
which is verified for $-1-\sqrt{5}<\nu<\sqrt{5}-1$. Combining this with the previous constraints on the parameter $\nu$, we obtain that our bound
of the fourth moment works for all $\nu$ verifying
$$
1<\nu< \sqrt{5}-1.
$$

The treatment of $\tilde I_2$ is quite more simple, since it requires only some changes of variable. Indeed, using the changes of variable $v
\mapsto v-v_*\sin\theta$ and $\theta \mapsto -\theta$
 \bee
\begin{split}
\tilde I_2 &= \int_{-\frac{\pi}{2}}^{\frac{\pi}{2}}\beta(\theta)\sin\theta\cos^3\theta
\int_{\R^2}vv_*(v^2-v_*^2)f(v) f(v_*) (f(\tilde v')-f(v))\ud v\ud v_*\ud\theta\\
&=\int_{-\frac{\pi}{2}}^{\frac{\pi}{2}}\beta(\theta)\sin\theta\cos^3\theta
\int_{\R^2}vv_*(v^2-v_*^2)f(v) f(v_*) f(\tilde v')\ud v\ud v_*\ud\theta\\
&=\int_{-\frac{\pi}{2}}^{\frac{\pi}{2}}\beta(\theta)\sin\theta\cos^3
\theta \cdot \\
& \null\quad \cdot\int_{\R^2}(v+v_*\sin\theta)v_*((v+v_*\sin\theta)^2-v_*^2)f(v+v_*\sin\theta) f(v_*) f(v)\ud v\ud v_*\ud\theta\\
&=-\int_{-\frac{\pi}{2}}^{\frac{\pi}{2}}\beta(\theta)\sin\theta\cos^3 \theta \cdot \\
& \null\quad \cdot\int_{\R^2}(v-v_*\sin\theta)v_*((v-v_*\sin\theta)^2-v_*^2)f(\tilde v') f(v_*) f(v)\ud v\ud v_*\ud\theta
\end{split}
\eee
 This implies
$$
\tilde I_2 \leq C(m,e) \left(\int_{-\frac{\pi}{2}}^{\frac{\pi}{2}}\beta(\theta)\sin^2\theta\ud\theta\right) \|f(t)\|_{L^\infty}\left(e^2 +
m^{\frac{1}{4}}\left(\int_\R v^4 f(t,v)\ud v\right)^{\frac{3}{4}} \right).
$$
\end{proof}

\section{Conclusions}

In this paper we investigated the asymptotic equivalence between the (mollified) Kac caricature of a Bose-Einstein gas and a nonlinear
Fokker-Planck type equation in the so-called grazing collision limit. The limit equation differs from the analogous one present in the literature
\cite{KQ93}, since in our case the linear diffusion has a diffusivity which depends on the solution itself, in order to guarantee the conservation
of energy. Our analysis refers to a mollified version of the equation, due to the difficulties of handle the third order nonlinearity present in
the Bose-Einstein correction. A further inside on the true model, done in the first part of the paper, shows that a proof of the boundedness of the
solution would be sufficient to avoid the presence of the mollifier.

\paragraph{Acknowledgement:} Support from the Italian Minister for Research, project ``Kinetic and
hydrodynamic equations of complex collisional systems'' is kindly
acknowledged. Thibaut Allemand thanks the Department of Mathematics
of the University of Pavia, where a part of this research has been
carried out, for the kind hospitality.

\bibliographystyle{acm}

\end{document}